\def\ps@pprintTitle{%
     \let\@oddhead\@empty
     \let\@evenhead\@empty
     \def\@oddfoot{\reset@font {\footnotesize\itshape to appear in J. Pure Appl. Algebra}\hfil\thepage\hfil%
     \llap{\footnotesize\itshape\today}}
     \let\@evenfoot\@oddfoot}
\newtheorem{lemma}{Lemma}[section]
\newtheorem{theorem}[lemma]{Theorem}
\newtheorem{proposition}[lemma]{Proposition}
\newtheorem{corollary}[lemma]{Corollary}
\theoremstyle{definition}
\newtheorem{definition}[lemma]{Definition}
\theoremstyle{remark}
\newtheorem{example}[lemma]{Example}
\newtheorem{remark}[lemma]{Remark}
\def\SN{\mathbb N}    
\def\SP{\mathbb P}    
\def\SQ{\mathbb Q}    
\def\SZ{\mathbb Z}    
\def\II{\mathfrak I}    
\def\int{\mbox{\rm{Int}}}             
\def\lc{{\rm lc}}                       
\def\hZ{\widehat{\SZ}}
\def\Aa{\mathcal A}         
\def\Cc{\mathcal C}         
\def\Ž{\'e}
\def\{\`e}
\def\ˆ{\`a}
\def\{\`u}
\def\™{\^{o}}
\def\{\^{e}}
\def\ž{\^{u}}
\def\{\c c}
\def\uE{\underline{E}}
\def\ua{\underline{\alpha}}
\def\ub{\underline{\beta}}
\def\ux{\underline{x}}
\def\uy{\underline{y}}
\def\uO{\underline{O}}
\def\uV{\underline{V}}
\def\ug{\underline{g}}
\def\uc{\underline{c}}
\def\uga{\underline{\gamma}}
\def\ufi{\underline{\varphi}}
\def\upsi{\underline{\psi}}
\def\be{\begin{equation}}
\def\ee{\end{equation}}
\def\NC{\normalcolor}
\begin{document}
\begin{frontmatter}

\title{Adelic versions of the Weierstrass approximation theorem}

\author{Jean-Luc Chabert}
\address{LAMFA (UMR-CNRS 7352), Universit\'e de Picardie, 33 rue Saint Leu, 80039 Amiens}
\ead{jean-luc.chabert@u-picardie.fr}

\author{Giulio Peruginelli}
\address{Dipartimento di Matematica, Universit\`a di Padova, Via Trieste 63, 35121, Padova}
\ead{gperugin@math.unipd.it}

\begin{abstract} 
Let $\uE=\prod_{p\in\SP}E_p$ be a compact subset of $\hZ=\prod_{p\in\SP}\SZ_p$ and denote by $\Cc(\uE,\hZ)$ the ring of continuous functions from $\uE$ into $\hZ$. We obtain two kinds of adelic versions of the Weierstrass approximation theorem. Firstly, we prove that the ring 
$\int_\SQ(\uE,\hZ):=\{f(x)\in\SQ[x]\mid f(\uE)\subseteq \hZ\}$
is dense in the product $\prod_{p\in\SP}\Cc(E_p,\SZ_p)\,$ for the uniform convergence topology. We also obtain an analogous statement for general compact subsets of $\hZ$.

Secondly, under the hypothesis that, for each $n\geq 0$, $\#(E_p\pmod{p})>n$ for all but finitely many primes $p$, we prove the existence of regular bases of the $\SZ$-module $\int_\SQ(\underline{E},\hZ)$, and show that, for such a basis $\{f_n\}_{n\geq 0}$, every function $\ufi$ in $\prod_{p\in\SP}\Cc(E_p,\SZ_p)$ may be uniquely written as a series $\sum_{n\geq 0}\uc_n f_n$ where $\uc_n\in\hZ$ and $\lim_{n\to \infty}\uc_n\to 0$. Moreover, we characterize the compact subsets $\uE$ for which the ring $\int_\SQ(\uE,\hZ)$ admits a regular basis in P\'olya's sense by means of an adelic notion of ordering which generalizes Bhargava's $p$-ordering.
\end{abstract}

\begin{keyword}
Ad\`eles\sep Integer-valued polynomials\sep Regular bases\sep Weierstrass Approximation Theorem. MSC Classification codes: 13F20, 11S05, 46S10, 12J25.
\end{keyword}

\end{frontmatter}

\bigskip
\section{Introduction: the classical $p$-adic versions of Weierstrass theorem}

A well known $p$-adic version of the Weierstrass polynomial approximation theorem due to Dieudonn\'e \cite{bib:Dieu} states the following: 

\begin{theorem}\label{th:dieu}
For every compact subset $E$ of $\SQ_p$, the ring of polynomial functions $\SQ_p[x]$ is dense in the ring $\Cc(E,\SQ_p)$ of continuous functions from $E$ into $\SQ_p$ with respect to the uniform convergence topology. 
\end{theorem}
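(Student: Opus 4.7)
The plan is to follow the classical three-step pattern: reduce to a compact subset of $\SZ_p$, sandwich $f$ by a locally constant function on $\SZ_p$, and then approximate locally constant functions uniformly by polynomials.

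First, since $E$ is compact in $\SQ_p$, it is bounded, hence contained in $p^{-k}\SZ_p$ for some $k\geq 0$. The dilation $x\mapsto p^{-k}x$ is a $\SQ_p$-algebra automorphism of $\SQ_p[x]$ that carries $\SZ_p$ onto $p^{-k}\SZ_p$, so I may assume $E\subseteq\SZ_p$.

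Next, fix $f\in\Cc(E,\SQ_p)$ and $\varepsilon>0$. Since $f$ is uniformly continuous on the compact $E$, I choose $N\geq 1$ so that $|x-y|_p\leq p^{-N}$ forces $|f(x)-f(y)|_p<\varepsilon$ for $x,y\in E$. Partition $\SZ_p$ into its $p^N$ cosets $a+p^N\SZ_p$, $a\in\{0,\dots,p^N-1\}$. For each coset meeting $E$ I fix a representative $c_a\in E\cap(a+p^N\SZ_p)$ and define a locally constant function $g\colon\SZ_p\to\SQ_p$ by $g(x)=f(c_a)$ on $a+p^N\SZ_p$ when that coset meets $E$, and $g(x)=0$ otherwise. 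By the choice of $N$, $|f(x)-g(x)|_p<\varepsilon$ on all of $E$.

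Finally, I approximate $g\in\Cc(\SZ_p,\SQ_p)$ uniformly by a polynomial in $\SQ_p[x]$. Since $g$ is bounded, $p^Mg\in\Cc(\SZ_p,\SZ_p)$ for some $M\geq 0$, and by Mahler's theorem $p^Mg=\sum_{n\geq 0}a_n\binom{x}{n}$ with $a_n\in\SZ_p$ tending to $0$; truncating the series yields polynomials in $\SQ[x]\subseteq\SQ_p[x]$ converging uniformly to $g$ on $\SZ_p$. Picking such $P\in\SQ_p[x]$ with $|g(x)-P(x)|_p<\varepsilon$ on $\SZ_p$ and combining the two estimates via the ultrametric inequality gives $|f(x)-P(x)|_p<\varepsilon$ for all $x\in E$, as required.

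The main obstacle lies in the last step; the reduction and the locally constant sandwich are routine consequences of compactness and the ultrametric topology. Invoking Mahler's theorem is the cleanest route. A self-contained alternative is to approximate each characteristic function $\chi_{a+p^N\SZ_p}$ by a polynomial in $\SQ_p[x]$, exploiting the fact that $(x-b)^M$ tends to $0$ uniformly on $b+p^N\SZ_p$ as $M\to\infty$, combined with Lagrange interpolation at integer representatives of the cosets.
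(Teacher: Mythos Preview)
Your proof is correct, but note that the paper does not actually prove this theorem: it is stated in the introduction as a classical result of Dieudonn\'e and simply cited (reference~\cite{bib:Dieu}) as background motivation for the adelic versions developed later. There is therefore no ``paper's own proof'' to compare against.

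That said, your argument is a clean and standard one. One minor remark: invoking Mahler's theorem (which the paper lists as Theorem~\ref{th:mah}) to establish Dieudonn\'e's theorem is logically fine, since Mahler's expansion is proved by a direct construction that does not rely on any prior density statement; but it is historically backwards (Mahler 1958 versus Dieudonn\'e 1944), and it makes the proof depend on a somewhat deeper result than necessary. The self-contained alternative you sketch at the end---approximating each characteristic function $\chi_{a+p^N\SZ_p}$ directly by polynomials---is closer in spirit to Dieudonn\'e's original argument and to Kaplansky's generalization, and would make the write-up independent of Mahler.
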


If we restrict to functions with values in $\SZ_p$, we have to consider the subring of polynomial functions whose values on $E$ are in $\SZ_p$, namely the ring $\int(E,\SZ_p)=\{f\in\SQ_p[x]\mid f(E)\subseteq \SZ_p\}$, and then, following Kaplansky~\cite{bib:Kap50}, we have:

\begin{theorem}\label{th:kap}
For every compact subset $E$ of $\SQ_p$, the ring of polynomial functions $\int(E,\SZ_p)$ is dense in $\Cc(E,\SZ_p)$, the ring  of continuous functions from $E$ into $\SZ_p$ with respect to the uniform convergence topology. 
\end{theorem}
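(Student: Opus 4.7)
The plan is to derive Theorem~\ref{th:kap} from Theorem~\ref{th:dieu} via a single ultrametric observation: any $\SQ_p$-valued polynomial that approximates a $\SZ_p$-valued continuous function closely enough is automatically $\SZ_p$-valued on $E$. Consequently, no genuinely new approximation work is needed beyond Dieudonn\'e's theorem.

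In detail, given $\varphi \in \Cc(E,\SZ_p)$ and $\epsilon > 0$, I would first reduce to the case $\epsilon \leq 1$: when $\epsilon > 1$ the zero polynomial lies in $\int(E,\SZ_p)$ and already approximates $\varphi$ to within $\epsilon$, since $\|\varphi\|_\infty \leq 1$. Regarding $\varphi$ as a continuous $\SQ_p$-valued function on the compact set $E \subset \SQ_p$ and applying Theorem~\ref{th:dieu}, I would obtain $g \in \SQ_p[x]$ with $|g(x) - \varphi(x)|_p < \epsilon \leq 1$ for every $x \in E$. The decisive step is then the ultrametric inequality
\[
|g(x)|_p \;\leq\; \max\bigl(\,|g(x) - \varphi(x)|_p,\; |\varphi(x)|_p\,\bigr) \;\leq\; 1,
\]
valid for every $x \in E$, which gives $g(E) \subseteq \SZ_p$ and hence $g \in \int(E,\SZ_p)$. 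Thus $g$ is the required element of $\int(E,\SZ_p)$ within $\epsilon$ of $\varphi$ in the uniform norm on $E$.

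I do not anticipate any serious obstacle: the real content of the statement is already absorbed into Dieudonn\'e's theorem, and the remaining step is one use of the ultrametric inequality. What deserves emphasis is that this move has no archimedean counterpart, and it is precisely this feature that makes the restriction to $\SZ_p$-valued polynomials free of any loss in approximation quality relative to Theorem~\ref{th:dieu}. A self-contained proof bypassing Dieudonn\'e would be considerably more involved: one would uniformly approximate $\varphi$ by a locally constant function on a finite clopen partition of $E$ (exploiting compactness, uniform continuity, and the total disconnectedness of $\SQ_p$), and then approximate each clopen indicator by a polynomial in $\int(E,\SZ_p)$. This alternative route, while longer, is closer in spirit to the regular-basis constructions the rest of the paper is devoted to.
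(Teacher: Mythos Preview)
Your argument is correct: the ultrametric inequality does exactly the work you claim, and the reduction to Theorem~\ref{th:dieu} is clean. The case split on $\epsilon$ is harmless (you could even drop it by replacing $\epsilon$ with $\min(\epsilon,1)$ at the outset).

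However, there is nothing to compare against: the paper does not prove Theorem~\ref{th:kap}. It is stated in the introduction as a classical result attributed to Kaplansky~\cite{bib:Kap50} and used as background for the adelic generalizations that follow. So your proposal is not a different route from the paper's proof---it is simply a proof where the paper offers none. Your derivation from Dieudonn\'e's theorem is in fact the standard one-line deduction, and your closing remark about the alternative route via clopen partitions and characteristic-function approximation is an accurate description of how one would proceed in a self-contained manner (and is indeed closer to what Kaplansky's original argument, and the later regular-basis machinery of Bhargava--Kedlaya invoked in Theorem~\ref{th:BK}, actually do).
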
 

In the particular case where $E$ is equal to $\SZ_p$, Mahler \cite{bib:Mahl} gave an effective approximation theorem in terms of the binomial polynomials $\binom{x}{n}:=\frac{x\,(x-1)\,\cdots\,(x-(n-1))}{n!}$.

\begin{theorem}\label{th:mah}
Every continuous function $\varphi\in\Cc(\SZ_p,\SQ_p)$ may be uniquely written as a series in the binomial polynomials $\binom{x}{n}$ with coefficients $c_n$ in $\SQ_p:$
$$\varphi(x)=\sum_{n\geq 0}c_n\binom{x}{n}, \textrm{ where }  v_p(c_n)\to +\infty \textrm{ and }
\inf_{x\in\SZ_p}v_p(\varphi(x))=\inf_{n\geq 0}v_p(c_n).$$
\end{theorem}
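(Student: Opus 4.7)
The plan is to construct the coefficients $c_n$ as iterated finite differences of $\varphi$ at $0$, and then exploit Kaplansky's theorem (Theorem~\ref{th:kap}) together with the classical fact that the binomial polynomials form a $\SZ_p$-basis of the ring $\int(\SZ_p,\SZ_p)$. After multiplying $\varphi$ by a suitable power of $p$, I may assume $\varphi\in\Cc(\SZ_p,\SZ_p)$, and set
\[
c_n := (\Delta^n\varphi)(0) = \sum_{j=0}^n(-1)^{n-j}\binom{n}{j}\varphi(j),
\]
where $\Delta f(x):=f(x+1)-f(x)$. I shall use the basis fact in the following form: every $P\in\SQ_p[x]$ of degree $\leq d$ has a unique expansion $P(x)=\sum_{n=0}^d a_n\binom{x}{n}$ with $a_n=(\Delta^n P)(0)$, and $P$ lies in $\int(\SZ_p,\SZ_p)$ if and only if all $a_n$ belong to $\SZ_p$.

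The heart of the argument will be to show that $v_p(c_n)\to+\infty$. Given $N\geq 0$, Kaplansky's theorem furnishes a polynomial $P_N\in\int(\SZ_p,\SZ_p)$ of some degree $d_N$ with $\|\varphi-P_N\|_{\SZ_p}\leq p^{-N}$. Writing $P_N=\sum_{n=0}^{d_N} c_n^{(N)}\binom{x}{n}$ with $c_n^{(N)}\in\SZ_p$, we obtain for every $n>d_N$
\[
c_n \;=\; c_n-c_n^{(N)} \;=\; (\Delta^n(\varphi-P_N))(0).
\]
Since the right-hand side is a $\SZ$-linear combination of values of $\varphi-P_N$ at nonnegative integers, its $p$-adic absolute value is bounded by $\|\varphi-P_N\|_{\SZ_p}\leq p^{-N}$. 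Hence $v_p(c_n)\geq N$ whenever $n>d_N$, which is exactly the required decay.

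To conclude, the bound $|\binom{x}{n}|_p\leq 1$ on $\SZ_p$, combined with $c_n\to 0$, ensures that $\sum_{n\geq 0}c_n\binom{x}{n}$ converges uniformly on $\SZ_p$ to a continuous function $\psi$. The usual inversion of finite differences yields $\psi(k)=\varphi(k)$ for every $k\in\SN$, and density of $\SN$ in $\SZ_p$ together with continuity of $\psi$ and $\varphi$ then forces $\psi=\varphi$. Uniqueness of the expansion is obtained by applying $\Delta^m$ at $0$ to any identically vanishing series. Finally, the identity $\inf_{x\in\SZ_p} v_p(\varphi(x))=\inf_{n\geq 0} v_p(c_n)$ follows from the two ultrametric estimates $v_p(\varphi(x))\geq\min_n v_p(c_n)$ (read off the series) and $v_p(c_n)\geq\min_{j} v_p(\varphi(j))$ (read off the definition of $c_n$). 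The main obstacle is the decay statement $v_p(c_n)\to+\infty$: this is precisely where the compactness of $\SZ_p$ enters the picture, through Kaplansky's density theorem, since without it the finite-difference coefficients of a merely continuous function need not tend to zero.
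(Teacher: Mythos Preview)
The paper does not prove Theorem~\ref{th:mah}: it is stated in the introduction as a classical result of Mahler, with a reference to \cite{bib:Mahl}, and is used only as background for the adelic generalizations developed later. There is therefore no ``paper's own proof'' to compare against.

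That said, your argument is correct and is essentially the standard modern proof of Mahler's theorem. The reduction to $\varphi\in\Cc(\SZ_p,\SZ_p)$, the identification $c_n=(\Delta^n\varphi)(0)$, and the use of the $\SZ_p$-basis property of the $\binom{x}{n}$ are all sound. The key step---deducing $v_p(c_n)\to+\infty$ from polynomial approximability---is carried out cleanly: for each $N$ you produce $P_N\in\int(\SZ_p,\SZ_p)$ with $\|\varphi-P_N\|\le p^{-N}$, observe that $c_n-c_n^{(N)}=(\Delta^n(\varphi-P_N))(0)$ is an ultrametric combination of values of $\varphi-P_N$, and conclude $v_p(c_n)\ge N$ once $n>\deg P_N$. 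The convergence, the identification $\psi=\varphi$ via density of $\SN$, uniqueness via iterated differences (legitimate since $\Delta$ is a contraction for the sup norm and the series converges uniformly), and the two-sided ultrametric estimate giving the infimum identity are all correct.

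One small remark: invoking Kaplansky's theorem here is slightly heavier than necessary, since for $E=\SZ_p$ the required polynomial approximation already follows from uniform continuity of $\varphi$ together with the fact that $\varphi$ is locally constant modulo $p^N$ on a finite partition into balls, which one interpolates by a polynomial in $\int(\SZ_p,\SZ_p)$. But using Theorem~\ref{th:kap} as a black box is perfectly legitimate and keeps the exposition short.
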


It is straightforward to see that, if $\varphi\in\Cc(\SZ_p,\SZ_p)$, then the coefficients $c_n$'s lie in $\SZ_p$ and the partial sums $\sum_{n\geq 0}^d c_n\binom{x}{n}$ are elements of $\int(\SZ_p,\SZ_p)$, so we find again Kaplansky's Theorem when $E=\SZ_p$. Mahler's result was extended to regular compact subsets of $\SQ_p$ by Amice~\cite{bib:Ami}, and then to every compact subset of $\SQ_p$ by Bhargava and Kedlaya \cite{bib:Bharg2}:

\begin{theorem}\label{th:BK}
For each compact subset $E$ of $\SQ_p$, there exists a sequence $\{a_n\}_{n\geq 0}$ of elements of $E$,  called $p$-ordering of $E$, such that the polynomials 
$ f_n(x)=\prod_{k=0}^{n-1}\frac{x-a_k}{a_n-a_k}$, $n\in\SN$, 
form a basis of the $\SZ_p$-module $\int(E,\SZ_p)$. Then, every $\varphi\in\Cc(E,\SQ_p)$ may be uniquely written as a series in the $f_n$'s with coefficients in $\SQ_p:$
$$\varphi(x)=\sum_{n\geq 0}c_n f_n(x)\textrm{ where } c_n\in\SQ_p\textrm{ and } \lim_{n\to+\infty}v_p(c_n)=+\infty\,.$$
 \end{theorem}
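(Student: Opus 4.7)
The plan is to prove Theorem \ref{th:BK} in three stages: (i) construct the $p$-ordering and verify that the associated polynomials $f_n$ lie in $\int(E,\SZ_p)$, (ii) show that $\{f_n\}_{n\geq 0}$ is a $\SZ_p$-basis of $\int(E,\SZ_p)$, and (iii) derive the uniformly convergent expansion for any $\varphi\in\Cc(E,\SQ_p)$.

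For (i) I would build $\{a_n\}$ inductively: given $a_0,\dots,a_{n-1}$, the function $g_n(a)=v_p\bigl(\prod_{k<n}(a-a_k)\bigr)$ is locally constant and integer-valued on $E\setminus\{a_0,\dots,a_{n-1}\}$, and is bounded below because compactness forces $E\subseteq p^{-N}\SZ_p$ for some $N$, thereby bounding each $v_p(a-a_k)$ from above. Compactness of $E$ then guarantees that this minimum is attained at some $a_n\in E$. By construction $g_n(a)\geq g_n(a_n)$ for every $a\in E$, which is exactly the statement that $f_n(E)\subseteq\SZ_p$.

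For (ii) I use the lower-triangular evaluation pattern $f_k(a_j)=0$ for $j<k$ with $f_k(a_k)=1$. Since $\deg f_k=k$, any $P\in\int(E,\SZ_p)$ of degree $d$ admits a unique $\SQ_p$-expansion $P=\sum_{k=0}^d c_kf_k$; evaluating successively at $a_0,a_1,\dots,a_d$ yields the recursion $c_k=P(a_k)-\sum_{j<k}c_jf_j(a_k)$, from which $c_k\in\SZ_p$ by induction on $k$. Hence $\{f_n\}$ is a $\SZ_p$-basis of $\int(E,\SZ_p)$.

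For (iii) I first observe that $\varphi(E)$ is compact, so $p^N\varphi\in\Cc(E,\SZ_p)$ for some $N\geq 0$, which reduces matters to the $\SZ_p$-valued case. Kaplansky's Theorem~\ref{th:kap} then supplies polynomials $Q_m\in\int(E,\SZ_p)$ with $v_p(Q_m(a)-\varphi(a))\geq m$ uniformly on $E$; writing $Q_m=\sum_k\beta_{m,k}f_k$ in the basis from (ii), the polynomial $p^{-m}(Q_{m+1}-Q_m)$ lies in $\int(E,\SZ_p)$, so its basis coefficients are in $\SZ_p$, forcing $\beta_{m+1,k}\equiv\beta_{m,k}\pmod{p^m}$ for every $k$. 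Each $(\beta_{m,k})_m$ is therefore Cauchy with limit $c_k\in\SZ_p$ satisfying $v_p(c_k-\beta_{m,k})\geq m$; since $\beta_{m,k}=0$ for $k>\deg Q_m$, this forces $v_p(c_k)\to+\infty$. The series $\sum_k c_kf_k$ then converges uniformly on $E$, and a telescoping estimate comparing partial sums with the $Q_m$ shows its sum equals $\varphi$; uniqueness follows from the triangular evaluation at the $a_n$. The main obstacle lies precisely here: one must control the simultaneous stability of \emph{all} coefficients $\beta_{m,k}$ as the Kaplansky approximations vary, and the crucial ingredient enabling this is the $\SZ_p$-integrality of the change of basis established in (ii).
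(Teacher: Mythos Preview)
The paper does not supply its own proof of Theorem~\ref{th:BK}; it is quoted in the introduction as a known result of Bhargava and Kedlaya~\cite{bib:Bharg2} (extending Amice~\cite{bib:Ami}) and is used throughout as a black box. So there is no ``paper's proof'' to compare against; your outline is essentially the standard argument from the cited source, and parts (ii) and (iii) are carried out correctly. In particular, the Cauchy argument in (iii) --- using that $p^{-m}(Q_{m+1}-Q_m)\in\int(E,\SZ_p)$ to force $\beta_{m+1,k}\equiv\beta_{m,k}\pmod{p^m}$ simultaneously for all $k$ --- is exactly the right mechanism, and your remark that $\beta_{m,k}=0$ for $k>\deg Q_m$ combined with $v_p(c_k-\beta_{m,k})\geq m$ correctly yields $v_p(c_k)\to+\infty$.

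There is one slip in (i). You write that $g_n$ is ``bounded below because compactness forces $E\subseteq p^{-N}\SZ_p$ \dots\ thereby bounding each $v_p(a-a_k)$ from above.'' The containment $E\subseteq p^{-N}\SZ_p$ gives $v_p(a-a_k)\geq -N$, i.e.\ a bound from \emph{below}, and it is this lower bound (summed over $k$) that makes $g_n\geq -nN$. An upper bound on the individual $v_p(a-a_k)$ is neither available (they blow up near $a_k$) nor needed. Once $g_n$ is integer-valued and bounded below, its infimum is an integer; attainment is cleanest seen by noting that $a\mapsto\bigl|\prod_{k<n}(a-a_k)\bigr|_p$ is continuous on the compact set $E$ and hence achieves its (positive, if $E$ is infinite) maximum. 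You should also make explicit that $E$ is assumed infinite so that the $a_n$ are distinct and the $f_n$ are well defined for all $n$.
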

 Moreover, one knows that with the previous notation:
$$ \inf_{x\in E}v_p(\varphi(x))=\inf_{n\geq 0}v_p(c_n)\,.$$

Our aim is to obtain adelic versions of these results. In the next section we recall the restricted product  topology on the ring $\Aa_f(\SQ)$ of finite ad\`eles of $\SQ$, since we are interested in the Banach space of continuous functions $\Cc(E,\Aa_f(\SQ))$ where $E$ denotes a compact subset of $\Aa_f(\SQ)$. The subring of polynomial functions that we will consider first is $$\int_{\SQ}(E,\hZ):=\{f\in\SQ[x]\mid f(E)\subseteq\hZ\}$$ where $\hZ$ denotes the profinite completion of $\SZ$. It is well-known that $\hZ$ is isomorphic to $\prod_{p\in\SP}\SZ_p$ and henceforth we will take the latter ring as our model for $\hZ$. In section~3, we establish the following theorem which is somewhat more general than the statement given in the summary:

\begin{theorem}
 Let $E$ be a compact subset of $\hZ$. The topological closure $($with respect to the uniform convergence topology$)$ of the ring of polynomial functions $\int_{\SQ}(E,\hZ)$ in the ring $\Cc(E,\hZ)$ of continuous functions from $E$ into $\hZ$ is the subring formed by the restrictions to $E$ of the functions belonging to the product  $\prod_{p\in\SP}\Cc(E_p,\SZ_p)$ where $E_p$ denotes the image of $E$ by the projection from $\hZ$ onto $\SZ_p$.
\end{theorem}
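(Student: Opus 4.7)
The plan is to prove the two inclusions, letting $R\subset\Cc(E,\hZ)$ denote the subring of restrictions to $E$ of elements of $\prod_{p\in\SP}\Cc(E_p,\SZ_p)$. For the inclusion $\overline{\int_\SQ(E,\hZ)}\subseteq R$, I first unravel the topology: a basis of neighborhoods of $0$ in $\hZ$ is given by the sets $U_{S,N}=\{(x_p)_p\mid v_p(x_p)\geq N \text{ for all } p\in S\}$ with $S\subset\SP$ finite and $N\in\SN$, so a sequence $(f_n)$ converges to $f$ uniformly on $E$ iff, for each $(S,N)$, eventually $v_p(f_n(x)-f(x))\geq N$ uniformly in $p\in S$ and $x\in E$; equivalently, for each prime $p$ separately, $f_n\to f$ uniformly in $\Cc(E,\SZ_p)$. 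Since any $f\in\SQ[x]\subset\SQ_p[x]$ acts componentwise on $\hZ$, its $p$-component $E\to\SZ_p$ factors through $\pi_p:E\to E_p$ by an element of $\int(E_p,\SZ_p)$. Because $\pi_p$ is a continuous surjection between compact Hausdorff spaces (hence a quotient map), this factorization passes to uniform limits in each prime, yielding the inclusion.

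For density $R\subseteq\overline{\int_\SQ(E,\hZ)}$, fix $\varphi\in R$ written $\varphi(x)=(\varphi_p(x_p))_p$ with $\varphi_p\in\Cc(E_p,\SZ_p)$, a finite $S=\{p_1,\dots,p_k\}$, and $N\in\SN$. By Kaplansky's Theorem~\ref{th:kap}, for each $i$ there exists $g_i\in\int(E_{p_i},\SZ_{p_i})\subset\SQ_{p_i}[x]$ with $v_{p_i}(g_i(x)-\varphi_{p_i}(x))\geq N$ on $E_{p_i}$; by $p_i$-adic rational approximation of its coefficients, I may take $g_i\in\SQ[x]$ without disturbing this bound, and I fix $D_i\in\SN$ with $D_ig_i\in\SZ[x]$.

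The main obstacle is then to \emph{globalize}: each $g_i$ may have bad valuations at primes $p\neq p_i$, so they must be assembled into a single $g\in\SQ[x]$ that is $p$-integer-valued on $E_p$ at \emph{every} prime $p$, not just those in $S$. By the Chinese remainder theorem in $\SZ$, I choose integers $\alpha_1,\dots,\alpha_k$ satisfying $\alpha_i\equiv 1\pmod{p_i^N}$, $v_{p_j}(\alpha_i)\geq N+v_{p_j}(D_i)$ for $j\neq i$ in $S$, and $v_p(\alpha_i)\geq v_p(D_i)$ at the finitely many primes $p\notin S$ dividing $D_i$, and set $g=\sum_i\alpha_ig_i$. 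These conditions ensure, via the decomposition $\alpha_ig_i=(\alpha_i/D_i)\cdot(D_ig_i)$ with $D_ig_i\in\SZ[x]$, that every summand is $p$-integer-valued on $E_p$ at each prime $p$ (with the $p=p_i$ case handled directly since $g_i\in\int(E_{p_i},\SZ_{p_i})$ and $\alpha_i\in\SZ$), so $g\in\int_\SQ(E,\hZ)$. Moreover, at each $p_i$, decomposing $g-g_i=(\alpha_i-1)g_i+\sum_{j\neq i}\alpha_jg_j$ and using the $p_i$-adic valuations imposed above gives $v_{p_i}(g(x)-g_i(x))\geq N$ on $E_{p_i}$, and hence $v_{p_i}(g(x)-\varphi_{p_i}(x))\geq N$, yielding the required approximation of $\varphi$ in the neighborhood associated with $(S,N)$.
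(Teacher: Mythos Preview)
Your proof is correct, and the first inclusion is argued exactly as in the paper. For the density direction the overall strategy also matches the paper's (approximate $\varphi_p$ locally by a polynomial in $\int(E_p,\SZ_p)$ for each $p$ in the finite set $S$, then globalize via the Chinese remainder theorem), but your implementation of the globalization step is genuinely different. The paper works \emph{coefficient by coefficient}: it writes each local approximant as $f_p=\sum_r c_{p,r}x^r$ and then, for each $r$, chooses a single rational $c_r$ that is $p$-adically close to $c_{p,r}$ for $p\in S$ and $p$-integral for $p\notin S$ (weak approximation in $\SQ$), setting $f=\sum_r c_r x^r$. You instead build a \emph{partition of unity}: you keep the local polynomials $g_i$ intact and choose integer scalars $\alpha_i$ with $\alpha_i\equiv\delta_{ij}\pmod{p_j^N}$ (suitably weighted by the denominators $D_i$) so that $g=\sum_i\alpha_i g_i$ does the job. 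Your version uses only the CRT in $\SZ$ rather than rational weak approximation, and it avoids any bookkeeping with the degrees of the $g_i$; the paper's version has the minor advantage of producing the approximating polynomial more directly from the local data. Either way the content is the same Remark~\ref{rem:C}-type statement: the overrings $\int_{\SQ}(E_p,\SZ_p)$ satisfy a Chinese remainder property over $\int_{\SQ}(\uE,\hZ)$.
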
 
 
This statement is noteworthy in so far as a polynomial with a single variable can simultaneously approach several functions (of one variable but on subsets of distinct $\SZ_p$'s). In order to extend Bhargava and Kedlaya's result, we characterize in section~4 the compact subsets $E$ of $\hZ$ such that the $\SZ$-module $\int_{\SQ}(E,\hZ)$ admits a \emph{regular basis} (i.e., a basis with one polynomial of each degree), and then we show how we can construct such a basis provided it exists.  But this is not enough: in order to obtain an adelic analogue of Theorem~\ref{th:BK}, we have to introduce, in section~5,  polynomials whose coefficients are ad\`eles and to consider bases of the $\hZ$-module $\int_{\SQ}(E,\hZ)\otimes_{\SZ}\hZ $.  These bases may be constructed explicitly by means of the notion of \emph{adelic ordering}, which generalizes Bhargava's $p$-ordering. Finally, in section~6, we use all the previous results to obtain the following analogue of Bhargava and Kedlaya's result: 

\begin{theorem}
Let $\uE=\prod_{p\in\SP}E_p$ be a compact subset of $\hZ$ such that all $E_p$'s are infinite and, for each $n\geq 0$, $\#(E_p\pmod{p})>n$ for all but finitely many $p.$ Then, there exist bases of the $\SZ$-module $\int_\SQ(\underline{E},\hZ)$, and, for such a basis $\{f_n\}_{n\geq 0}$, every $\ufi\in\prod_{p\in\SP}\Cc(E_p,\SZ_p)$ may be uniquely written as a series $\sum_{n\geq 0}\uc_n f_n$ where $\uc_n\in\hZ$ and $\lim_{n\to \infty}\uc_n\to 0$. \end{theorem}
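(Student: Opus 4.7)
My strategy is to reduce the adelic statement to a prime-by-prime application of the Bhargava--Kedlaya theorem (Theorem~\ref{th:BK}), using the existence of a global regular basis from Section~4 as the essential bridge between the global $\SZ$-module and its local $\SZ_p$-analogues.

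First, the residue-size hypothesis is precisely what is needed in Section~4 to produce a regular basis $\{f_n\}_{n\geq 0}$ of the $\SZ$-module $\int_\SQ(\uE,\hZ)$. This disposes of the existence assertion of the theorem.

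The pivotal intermediate step is to show that such a global $\SZ$-basis is, \emph{simultaneously at every prime $p$}, a regular $\SZ_p$-basis of $\int(E_p,\SZ_p)$. A regular $\SZ_p$-basis is characterised, up to $\SZ_p^\times$-units, by the $p$-adic valuations of its leading coefficients, and those valuations are nothing but the local Bhargava factorials attached to $E_p$. The residue-size hypothesis ensures, via the factorial bookkeeping of Sections~4--5, that the denominator of the leading coefficient of $f_n$ decomposes at each prime $p$ as precisely this local factorial, while $f_n(E_p)\subseteq\SZ_p$ holds by construction. Hence $f_n$ has the correct leading $p$-adic behaviour, and $\{f_n\}$ is automatically a $\SZ_p$-regular basis of $\int(E_p,\SZ_p)$ for every prime $p$.

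Granted this local-global compatibility, I apply Theorem~\ref{th:BK} separately at each prime to the component $\varphi_p$ of $\ufi=(\varphi_p)_p$: there exist unique $c_{n,p}\in\SZ_p$ with $v_p(c_{n,p})\to+\infty$ such that $\varphi_p=\sum_{n\geq 0}c_{n,p}\,f_n$ uniformly on $E_p$. Setting $\uc_n:=(c_{n,p})_p\in\prod_p\SZ_p=\hZ$, componentwise uniqueness promotes to uniqueness of the adelic coefficients; the requirement $\uc_n\to 0$ in the profinite topology on $\hZ$ translates to $v_p(c_{n,p})\to+\infty$ for each $p$, which is exactly what Theorem~\ref{th:BK} supplies; and uniform convergence of $\sum_n\uc_n f_n$ to $\ufi$ in the natural topology on $\prod_p\Cc(E_p,\SZ_p)$ follows from uniform convergence at each prime.

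The main obstacle is the intermediate compatibility step: matching the single global polynomial basis against the local Bhargava--Kedlaya bases at every prime, including those finitely many exceptional primes where the residue hypothesis first fails for a given $n$. This is where the adelic factorial machinery of Sections~4 and~5 is indispensable, since without it the global basis could a priori ``miss'' some local generators. Once this synchronisation is in place, the assembly of the expansion into an adelic series with $\hZ$-coefficients is essentially formal.
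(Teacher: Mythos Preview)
Your approach is correct and essentially the same as the paper's: both establish the existence of a regular $\SZ$-basis via Corollary~\ref{th:mod}, observe that such a basis localizes to a regular $\SZ_p$-basis of $\int(E_p,\SZ_p)$ at every prime (this is Proposition~\ref{th:localizationp} combined with~(\ref{eq:completion})), and then apply Bhargava--Kedlaya componentwise to each $\varphi_p$ and assemble the coefficients into an element of $\hZ$. One small technical point worth tightening: Theorem~\ref{th:BK} as stated in the paper gives the expansion only in the basis attached to a $p$-ordering, not in an arbitrary regular basis of $\int(E_p,\SZ_p)$; for the general $\{f_n\}$ you either need the extension the paper explicitly invokes (\cite[Theorem~2]{bib:Bharg2} or \cite[Theorem~2.7]{bib:CC2}), or a short change-of-basis argument noting that the transition matrix between two regular $\SZ_p$-bases is upper triangular with units on the diagonal, which preserves both integrality of the coefficients and the condition $c_{n,p}\to 0$.
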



\section{The adelic framework}

Let $\Aa_f(\SQ)$ denote the topological {\em ring of finite ad\`eles} of $\SQ$  (see for example \cite[Chapter 1]{bib:GH}). Recall that, as a subset, $\Aa_f(\SQ)$ is the restricted product of the fields $\SQ_p$ with respect to the rings $\SZ_p,$ as $p$ ranges through the set of primes $\SP$, that is:
$$\Aa_f(\SQ)=\left\{\underline{x}=(x_p)\in\prod_{p\in\SP}\SQ_p \mid  x_p\in\SZ_p \textrm{ for all but finitely many } p\in\SP\right\}.$$
The topology on $\Aa_f(\SQ)$ is the restricted product topology characterized by the fact that the following subsets form a basis of open subsets: $\prod_{p\in\SP}O_p$ where $O_p$ is an open subset of $\SQ_p$ and $O_p=\SZ_p$ for all but finitely many $p$.

Clearly, $\Aa_f(\SQ)$ is locally compact and contains the compact subring $\hZ,$ the profinite completion of $\SZ$ with respect to the ideal topology, that is,
$$\hZ=\varprojlim_{n>1}\SZ/n\SZ\cong\prod_{p\in\SP}\SZ_p\,.$$

Note that the topology induced on $\hZ=\prod_p\SZ_p$ by the restricted product topology of $\Aa_f(\SQ)$ is nothing else than the product topology.

Note also that $\SQ$ may be embedded in $\Aa_f(\SQ)$ in the following way: 
\begin{align}
j:\SQ\to&\Aa_f(\SQ)\label{eq:j}\\
\alpha\mapsto& (\alpha_p)_{p\in\SP} \nonumber
\end{align}
where $\alpha_p=\alpha$, for each $p\in\SP$.

\smallskip

For every compact subset $E$ of $\Aa_f(\SQ)$, we consider the $\SQ$-vector space $\Cc(E,\Aa_f(\SQ))$ of continuous functions from $E$ into $\Aa_f(\SQ)$. 
The topology on $\Cc(E,\Aa_f(\SQ))$ that we will consider is the uniform convergence topology with respect to the restricted product topology on $\Aa_f(\SQ)$. Endowed with this topology, the $\SQ$-vector space  $\Cc(E,\Aa_f(\SQ))$ is a Banach space. 

\smallskip

One might be tempted to find some ring of polynomial functions with rational coefficients which could be dense in $\Cc(E,\Aa_f(\SQ))$. In fact, our first result is negative.

\begin{proposition}
The Banach space $\Cc(E,\Aa_f(\SQ))$ does not contains any dense $\SQ$-subspace of polynomial functions.  
\end{proposition}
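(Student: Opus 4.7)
The plan is to identify a structural obstruction to approximation by elements of $\SQ[x]$ and then exhibit a continuous function violating it. Since $\SQ[x]$ is itself the largest $\SQ$-subspace of polynomial functions, it suffices to show that $\SQ[x]$ is not dense in $\Cc(E,\Aa_f(\SQ))$ for the uniform convergence topology.

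First, for each prime $p$, the coordinate projection $\pi_p\colon \Aa_f(\SQ)\to \SQ_p$ is continuous for the restricted product topology. Hence if a sequence $f_n\in\SQ[x]$ converges uniformly to some $\ufi\in\Cc(E,\Aa_f(\SQ))$, the scalar functions $\ux\mapsto f_n(x_p)$ converge uniformly on $E$ to $\varphi_p := \pi_p\circ\ufi$. Because $f_n(x_p)$ depends on $\ux$ only through its $p$-th coordinate, the limit $\varphi_p$ inherits this property: whenever $\ux,\ux'\in E$ satisfy $x_p=x_p'$, we must have $\varphi_p(\ux)=\varphi_p(\ux')$. Thus every element of the uniform closure of $\SQ[x]$ takes the ``product'' form $\ufi(\ux)=(\psi_p(x_p))_p$ for continuous functions $\psi_p\colon E_p\to \SQ_p$.

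To violate this, I would assume $E$ admits two distinct points $\ua,\ub$ with $a_p=b_p$ for some prime $p$ (the case of interest, since otherwise $E$ injects into each $E_p$ and the space $\Cc(E,\Aa_f(\SQ))$ is essentially a single $\Cc(E_p,\SQ_p)$). Then $a_q\neq b_q$ for some $q\neq p$, and by the total disconnectedness of $\SQ_q$ we find a clopen $V\subseteq\SQ_q$ with $a_q\in V$ and $b_q\notin V$. I then define $\ufi\colon E\to\hZ$ so that its $p$-th component is $\chi_V(x_q)$ and all other components vanish. This $\ufi$ is visibly continuous with values in $\hZ\subseteq\Aa_f(\SQ)$, yet $\varphi_p(\ua)=1\neq 0=\varphi_p(\ub)$ despite $a_p=b_p$, contradicting the product form derived above. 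Hence $\ufi$ lies outside the uniform closure of $\SQ[x]$, and \emph{a fortiori} outside the closure of any $\SQ$-subspace of polynomial functions.

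The main obstacle is the first step: one must cleanly verify that uniform convergence in $\Aa_f(\SQ)$ propagates to each coordinate, which rests on continuity of the projections $\pi_p$ and on the fact that a basic neighbourhood of $0$ in $\Aa_f(\SQ)$ has an open of $\SQ_p$ in its $p$-th factor. Once that point is settled, the construction of the obstructing function uses nothing beyond the total disconnectedness of $\SQ_q$ and the openness of $\hZ$ inside $\Aa_f(\SQ)$.
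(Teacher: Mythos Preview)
Your argument is correct and takes a different route from the paper's. You isolate the structural obstruction directly: any uniform limit of $\SQ[x]$ has its $p$-th component factoring through $x_p$, and you then exhibit a continuous $\ufi$ violating this by letting $\varphi_p$ depend on $x_q$ via a clopen indicator. This is exactly the mechanism the paper itself invokes later to prove one containment in Theorem~\ref{th:cap2}, so your approach anticipates that argument. The paper's proof of the present proposition instead works along diagonally embedded integers and exploits the $p$-adic continuity of polynomials: taking $\chi$ to be the characteristic function of $q\SZ_q$, an approximating $f\in\SQ[x]$ would have to satisfy $f(0)\in 1+p\SZ_p$ (since $q\mid 0$) yet $f(p^k)\in p\SZ_p$ for all $k\geq 1$ (since $q\nmid p^k$), contradicting $f(p^k)-f(0)\in p\SZ_p$ for $k$ large. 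Your version is more conceptual and immediately reusable; the paper's avoids having to explicitly locate two points of $E$ sharing a coordinate. One caveat worth flagging: the proposition is literally false when $E$ is a singleton (the diagonal $\SQ$ is dense in $\Aa_f(\SQ)$), so some nondegeneracy of $E$ is implicit in both arguments; your parenthetical acknowledges this, which is appropriate.
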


\begin{proof}
It is enough to prove that even one component of one function restricted to one variable cannot be approximated. Thus, we will prove that $\SQ[x]$ is not dense in $\Cc(\SZ_q,\SZ_p)$ where $p\not=q\in\SP$. Indeed, the function $\chi:\SZ_q\to\SZ_p$ which is the characteristic function of the subset $q\SZ_q$ is continuous. Assume that $f\in\SQ[x]$ is an approximation of $\chi$ mod $p\SZ_p$, that is, $f(q\SZ_p)\subseteq 1+p\SZ_p$ and $f(\SZ_q\setminus q\SZ_q)\subseteq p\SZ_p$. In particular, $f(0)\in 1+p\SZ_p$ while $f(p^k)\in p\SZ_p$ for all $k\geq 1$. The fact that, for every polynomial $g\in\SQ[x]$, $g(p^k)-g(0)\in p\SZ_p$ for $k$ large enough leads to a contradiction.
\end{proof}

Thus, the topological closure of subrings of polynomial functions in $\SQ[x]$ will be strictly contained in $\Cc(E,\Aa_f(\SQ))$. In the sequel, we will adopt the following notation.

\medskip

\noindent{\bf Notation}. 

\noindent -- For every prime $p\in\SZ$, let $\pi_p$ be the canonical surjection from $\Aa_f(\SQ)$ onto $\SQ_p$. 

\noindent -- For every subset $E$ of $\Aa_f(\SQ)$ and $p\in\SP$, let $E_p=\pi_p(E)$, and denote by $\uE$ the cartesian product of these $E_p$'s:

\centerline{$\uE:=\prod_{p\in\SP}\pi_p(E)=\prod E_p\,.$}

The notation $\uE$ will always denote  a subset of $\Aa_f(\SQ)$ that can be written as a cartesian product $\uE=\prod E_p$.

\noindent -- For every function $\psi$ from $E$ into $\Aa_f(\SQ)$,  we denote by $\psi_p$ its components: 

\centerline{$\psi_p=\pi_p\circ\psi:E\to\SQ_p$.}

 Clearly, if $\psi\in\Cc(E,\Aa_f(\SQ))$, then the components $\psi_p$ are continuous and belong to $\Cc(E,\SQ_p)$.

\medskip

We continue with a lemma which collects various results about compact subsets and continuous functions on rings of  ad\`eles.

\begin{lemma}\label{rem:1}
Let $E$ be a compact subset of $\Aa_f(\SQ)$ and $\psi\in\Cc(E,\Aa_f(\SQ))$. Then:
\begin{itemize}
\item[(1)] there exists $d\in\SN^*$ such that $dE\subseteq\hZ$;

\item[(2)] there exists $d_1\in\SN^*$ such that  $d_1\psi\in\Cc(E,\hZ)$;

\item[(3)] let $d$ and $d_1\in\SN^*$ be as in (1) and (2). Then the function $\varphi$ defined by $\varphi(\ux) =d_1\psi(\frac{1}{d}\ux)$ belongs to $\Cc(dE,\hZ).$
\end{itemize}
\end{lemma}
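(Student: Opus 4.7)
The plan is to exploit the restricted-product topology on $\Aa_f(\SQ)$. For each finite subset $S\subseteq\SP$, set
$$U_S := \prod_{p\in S}\SQ_p\times\prod_{p\notin S}\SZ_p;$$
this is a basic open subset of $\Aa_f(\SQ)$, and the family $\{U_S\}_S$ is both a cover of $\Aa_f(\SQ)$ and nested under inclusion of $S$ (namely $U_S\subseteq U_{S'}$ whenever $S\subseteq S'$). For part (1) I would use compactness of $E$ to extract a finite subcover; by nestedness this collapses to a single containment $E\subseteq U_{S_0}$ for some finite $S_0\subseteq\SP$. For each $p\in S_0$ the projection $\pi_p(E)\subseteq\SQ_p$ is compact, hence bounded, so there exists $n_p\ge 0$ with $p^{n_p}\pi_p(E)\subseteq\SZ_p$. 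Taking $d:=\prod_{p\in S_0}p^{n_p}$ then yields $dE\subseteq\hZ$, since at primes $p\notin S_0$ the components already lie in $\SZ_p$ by definition of $U_{S_0}$.

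For (2) I would apply (1) to the compact set $\psi(E)$, which is the continuous image of a compact set under a map into $\Aa_f(\SQ)$. This produces $d_1\in\SN^*$ with $d_1\psi(E)\subseteq\hZ$. The map $d_1\psi:E\to\Aa_f(\SQ)$ is continuous and factors through $\hZ$, and since the subspace topology on $\hZ\subseteq\Aa_f(\SQ)$ coincides with the product topology on $\prod_p\SZ_p$ (as recorded in the text immediately preceding the lemma), $d_1\psi$ is continuous as a map into $\hZ$. Hence $d_1\psi\in\Cc(E,\hZ)$.

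For (3), once $d$ and $d_1$ are in hand, multiplication by $\frac{1}{d}$ is a homeomorphism $dE\to E$ (its inverse is multiplication by $d$, both being continuous on $\Aa_f(\SQ)$), so the map $\varphi:dE\to\Aa_f(\SQ)$ given by $\varphi(\ux)=d_1\psi(\frac{1}{d}\ux)$ is a composition of three continuous maps. Its values lie in $\hZ$ by (2), so once more by the identification of subspace and product topology on $\hZ$ it belongs to $\Cc(dE,\hZ)$.

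The only genuine obstacle is (1): the content is that compactness in the restricted-product topology imposes a \emph{uniform} bound on denominators, i.e.\ that $E$ has only finitely many primes $p$ at which $\pi_p(E)\not\subseteq\SZ_p$, and that a single integer $d$ suffices at those primes. The nested open cover $\{U_S\}_S$ is precisely the device that extracts this uniformity from abstract compactness; parts (2) and (3) are then essentially formal consequences of (1) together with the elementary fact that continuity into $\Aa_f(\SQ)$ of a function with image contained in $\hZ$ is the same as continuity into $\hZ$.
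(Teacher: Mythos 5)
Your proof is correct and follows essentially the same route as the paper's: reduce (1) to the fact that only finitely many projections $\pi_p(E)$ escape $\SZ_p$ (you make explicit, via the nested cover $\{U_S\}$, what the paper asserts ``by definition of the topology''), then deduce (2) by applying (1) to the compact image $\psi(E)$, and (3) by composing continuous maps. No gaps; the extra detail you supply for (1) and (3) is exactly what the paper's terse ``follows immediately'' and ``obvious'' compress.
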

\begin{proof}
(1) The projections $E_p=\pi_p(E)$ of $E$ are compact subsets of $\SQ_p,$ and hence, $\uE=\prod_{\in\SP}E_p$ is compact. Clearly, we have the containment $E\subseteq \uE=\prod_{\in\SP}E_p$. Moreover, by definition of the topology on $\Aa_f(\SQ)$, we necessarily have that $E_p\subseteq\SZ_p$ for all but finitely many $p$, so (1) now follows immediately.

(2) Since $E$ is compact, every continuous function on $E$ with values in $\Aa_f(\SQ)$ is uniformly continuous. It is clear that the image  $\psi(E)$ is a compact subset of $\Aa_f(\SQ)$. By (1) there exists $d_1\in\SN^*$ such that $d_1\cdot\psi(E)\subseteq\hZ$, that is, $d_1\psi\in\Cc(E,\hZ)$. 

(3) Obvious.
\end{proof}

Now, the question could be: which subring of polynomial functions shall we consider? First, we make the choice of considering polynomials in one variable with coefficients in $\SQ$. We will see why this choice is sufficient at least for the first part of our work. 

\medskip

For $f=\sum_{k=0}^nc_kx^k\in\SQ[x]$ and $\ua=(\alpha_p)\in\Aa_f(\SQ)$, the value of $f$ at $\ua$ is:
$$ f(\ua)=\sum_{k=0}^nc_k\,\ua^k=\left(\sum_{k=0}^nc_k\,\alpha_p^k\right)_{p\in\SP}=(f(\alpha_p))_{p\in\SP}\in\prod_{p\in\SP}\SQ_p.$$
Note that these equalities follow from the structure of $\Aa_f(\SQ)$ as $\SZ$-algebra.  Clearly, we have $f(\ua)\in\Aa_f(\SQ)$. Thus, every $f\in\SQ[x]$ may be considered as a function from $\Aa_f(\SQ)$ into itself defined by:
\begin{align}
\Aa_f(\SQ)\to &\Aa_f(\SQ)\label{eq:deff}\\
\ua=(\alpha_p)_{p\in\SP}\mapsto & f(\ua)=(f(\alpha_p))_{p\in\SP}\nonumber
\end{align}
Are they continuous functions? Fix some $\ua\in\Aa_f(\SQ)$ and some basic open neighborhood $\uO=\prod_{p\in\SP}O_p$ of $0$ in $\Aa_f(\SQ)$. Clearly the components of $f$ (which are all equal to $f$) are continuous functions from $\SQ_p$ into $\SQ_p$, and hence, for every $p\in\SP$, there exists an open neighborhood $V_p$ of 0 in $\SQ_p$ such that $(\beta_p-\alpha_p)\in V_p$ implies $(f(\beta_p)-f(\alpha_p))\in O_p$. Let $\SP_1$ be the subset of $\SP$ formed by the $p$'s such that $f\in\SZ_p[x]$, $O_p=\SZ_p$, and $\alpha_p\in\SZ_p$. For $p\in\SP_1$, we may choose $\SZ_p$ instead of $V_p$, and then, since $\SP\setminus\SP_1$ is finite, we may consider the basic open neighborhood of $0:$ $\uV=\prod_{p\in\SP_1}\SZ_p\times\prod_{p\notin\SP_1}V_p$ which is such that $(\ub-\ua)\in\uV\Rightarrow(f(\ub)-f(\ua))\in\uO.$
Finally, $f$ is a uniformly continuous function from $\Aa_f(\SQ)$ into itself. 

In other words, with the previous convention (\ref{eq:deff}) about the polynomial functions~$f$, for each subset $E$ of $\Aa_f(\SQ)$ we have $\SQ[x]\subset  \Cc(E,\Aa_f(\SQ))$.  Now, we may state our adelic theorems.

\section{First adelic Weierstrass theorems}

In the previous section we remarked that for our considerations without loss of generality we may  restrict our proofs to
compact subsets $E$ of $\hZ$ (Lemma \ref{rem:1}). 

\begin{definition}\label{def:int}
Let $E$ be a subset of $\hZ$. The subring of $\Cc(E,\hZ)$ formed by the rational polynomial functions which are {\em integer valued on the subset $E$}, that is, $\Cc(E,\hZ)\cap\SQ[x]$, is denoted by 
$\int_{\SQ}(E,\hZ)=\{f\in\SQ[x]\mid f(E)\subseteq\hZ\}.$ More precisely:
$$\int_{\SQ}(E,\hZ):=\{f\in\SQ[x]\mid \forall \ua=(\alpha_p)_p\in E,\;\;\forall p\in\SP,\;\;f(\alpha_p)\in\SZ_p\}.$$
\end{definition}

This notation is a particular case of the following that we will use in the whole paper: if $\Aa$ is a commutative ring, $A$ and $B$ are subrings of $\Aa$ and $E$ is a subset of $\Aa$, then
$$ \int_{A}(E,B):=\{f\in A[x] \mid f(E)\subseteq B\}\,.$$
When the ring $B$ is an integral domain and $A$ denotes its quotient field, we will omit the index $A$, for example as with the previous notation $\int(E,\SZ_p)$ where $\SQ_p$ is omitted. If $E=B$, then we set $\int_{A}(B,B)=\int_{A}(B)$.

\begin{remark}\label{rem:compact subsets hZ}
Let $E$ be a subset of $\hZ$. It is obvious from Definition~\ref{def:int} (see also \cite{bib:CP}) that 
$$\int_{\SQ}(E,\hZ)=\int_{\SQ}(\uE,\hZ), \quad  \textrm{ where }\uE=\prod_{p\in\SP}E_p\,.$$
\end{remark}

\noindent{\bf Notation}. 

\noindent -- Let $\rho_E$ be the canonical homomorphism from $\Cc(\uE,\hZ)$ to $\Cc(E,\hZ)$, which maps a continuous function $\varphi$ on $\uE$ with values in $\hZ$ to its restriction $\varphi_{|E}$. 

\noindent -- We reserve the notation $\ufi,\upsi$ for maps of $\Cc(\uE,\hZ)$ and $\psi,\varphi$ for maps of $\Cc(E,\hZ)$.\\

 For any subset $E$ of $\hZ$, note that the direct product $\prod_{p\in\SP}\Cc(E_p,\SZ_p)$ canonically embeds into $\Cc(\uE,\hZ)$: the image of this embedding is formed precisely by those continuous functions $\ufi$ from $\uE$ to $\hZ$ whose component $\varphi_p$ depends only on the "$p$-th variable", for $p\in\SP$. \NC We can now state the adelic analogue of Kaplansky's result (Theorem~\ref{th:kap}), which we restate here for the sake of the reader, in a slightly different version.
\begin{theorem}\label{th:cap2}

Let $E$ be a compact subset of $\hZ$. The topological closure $($with respect to the uniform convergence topology$)$ of the ring of polynomial functions $\int_{\SQ}(E,\hZ)$ in the ring $\Cc(E,\hZ)$ of continuous functions from $E$ into $\hZ$ is equal to $\rho_E(\prod_{p\in\SP}\Cc(E_p,\SZ_p))$, where $E_p=\pi_p(E)$ for each $p\in\SP$. 
\end{theorem}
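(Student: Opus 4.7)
The plan is to prove both inclusions separately. For the easy direction, note that every polynomial $f\in\int_{\SQ}(E,\hZ)=\int_{\SQ}(\uE,\hZ)$ (Remark~\ref{rem:compact subsets hZ}) lies in $\rho_E(\prod_{p}\Cc(E_p,\SZ_p))$, since its $p$-th component on $E$ is the map $\alpha_p\mapsto f(\alpha_p)$, depending only on $\pi_p$. Moreover, $\rho_E(\prod_{p}\Cc(E_p,\SZ_p))$ is precisely the set of those $\psi\in\Cc(E,\hZ)$ whose $p$-th component factors through $\pi_p|_E\colon E\to E_p$ for every $p$; this is a pointwise condition preserved under uniform limits, so the image is closed. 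Hence $\overline{\int_{\SQ}(E,\hZ)}\subseteq \rho_E(\prod_{p}\Cc(E_p,\SZ_p))$.

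For the reverse inclusion, fix $\psi=\rho_E(\ufi)$ with $\ufi=(\varphi_p)_p\in\prod_p\Cc(E_p,\SZ_p)$ and an integer $N=\prod_{i=1}^k p_i^{n_i}\in\SN^*$. Since the family $\{N\hZ\}_{N\geq 1}$ is a basis of open neighborhoods of $0$ in $\hZ$, uniform approximation of $\psi$ reduces to constructing $f\in\SQ[x]$ satisfying $f(\alpha)\equiv\varphi_{p_i}(\alpha)\pmod{p_i^{n_i}}$ for every $\alpha\in E_{p_i}$ and every $i=1,\dots,k$, together with $f(E_q)\subseteq\SZ_q$ for every prime $q$.

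The key step is to glue local approximations into a single rational polynomial. By Kaplansky's theorem (Theorem~\ref{th:kap}), for each $i$ pick $g_i(x)=\sum_{j=0}^d a_{ij}x^j\in\int(E_{p_i},\SZ_{p_i})\subseteq\SQ_{p_i}[x]$ with $g_i\equiv\varphi_{p_i}\pmod{p_i^{n_i}}$ on $E_{p_i}$, using a common degree $d$ after padding with zero coefficients. Choose $M=\prod_i p_i^{m_i}$ large enough that $Ma_{ij}\in\SZ_{p_i}$ for all $i,j$, and by the Chinese Remainder Theorem in $\SZ$ pick integers $c_j$ with $c_j\equiv Ma_{ij}\pmod{p_i^{n_i+m_i}}$ for every $i$. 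Setting $b_j=c_j/M\in\SQ$ gives $b_j-a_{ij}\in p_i^{n_i}\SZ_{p_i}$ for every $i,j$; and since $M$ has no prime factor outside $\{p_1,\dots,p_k\}$, one has $b_j\in\SZ_q$ for every $q\notin\{p_1,\dots,p_k\}$.

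Finally, $f(x)=\sum_{j=0}^d b_j x^j$ satisfies $f-g_i\in p_i^{n_i}\SZ_{p_i}[x]$, so on $E_{p_i}\subseteq\SZ_{p_i}$ one has $f\equiv g_i\equiv\varphi_{p_i}\pmod{p_i^{n_i}}$, and in particular $f(E_{p_i})\subseteq\SZ_{p_i}$; while for $q\neq p_i$, the containment $f\in\SZ_q[x]$ together with $E_q\subseteq\SZ_q$ yields $f(E_q)\subseteq\SZ_q$. Hence $f\in\int_{\SQ}(\uE,\hZ)=\int_{\SQ}(E,\hZ)$ and $f\equiv\psi\pmod{N\hZ}$ on $E$, completing the approximation. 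The main obstacle is exactly this CRT-based gluing: one must meet the finitely many local $p_i$-adic congruences simultaneously while avoiding the introduction of denominators at primes outside $\{p_1,\dots,p_k\}$, which the clearing-by-$M$ trick resolves.
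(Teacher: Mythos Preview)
Your proof is correct and follows essentially the same route as the paper's: both inclusions are handled the same way, with the hard direction obtained by locally approximating $\varphi_{p_i}$ at the finitely many relevant primes and then CRT-gluing the coefficients into a single rational polynomial that is integral at all remaining primes. The only cosmetic differences are that you phrase the easy inclusion as ``polynomials lie in the image and the image is closed'' rather than arguing directly that a limit factors, you invoke Kaplansky's Theorem~\ref{th:kap} (which suffices) where the paper cites Theorem~\ref{th:BK}, and you spell out the clearing-denominators trick where the paper simply appeals to ``an extension of the Chinese remainder theorem''.
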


In particular, in the case $E=\uE=\prod_{p\in\SP}E_p$, the topological closure of $\int_{\SQ}(\uE,\hZ)$ in $\Cc(\uE,\hZ)$ is equal to $\prod_{p\in\SP}\Cc(E_p,\SZ_p)$.
\begin{proof}
Assume that $\varphi=(\varphi_p)_{p\in\SP}\in\Cc(E,\hZ)$ belongs to the topological closure of $\int_{\SQ}(E,\hZ)$. Fix some $p\in\SP$. Then, whatever $k\in\SN$, there exists $f\in\int_{\SQ}(E,\hZ)$ such that, for every $\ua\in E$, $v_p(\varphi_p(\ua)-f_p(\ua))\geq k$ (where $f_p=\pi_p\circ f$). The fact that, by definition,  $f_p(\ua)=f(\alpha_p)$ implies that, for all $\ua$ and $\ub\in E$ such that $\alpha_p=\beta_p$, one has $v_p(\varphi_p(\ua)-\varphi_p(\ub))\geq k$. Since $k$ may be as large as we want, we may conclude that $\alpha_p=\beta_p$ implies $\varphi_p(\ua)=\varphi_p(\ub)$. Thus, $\varphi_p(\ua)$ is a function of $\alpha_p$ only, in other words, $\varphi_p\in\Cc(E_p,\SZ_p)$, where $E_p=\pi_p(E)$. Hence, this shows that $\varphi$ is the restriction to $E$ of the map $\ufi\in \Cc(\uE,\hZ)$ whose components are equal to $\varphi_p$, for $p\in\SP$ (thus, more precisely, $\ufi$ is in $\prod_{p\in\SP}\Cc(E_p,\SZ_p)$). Therefore, the topological closure of $\int_{\SQ}(E,\hZ)$ in $\Cc(E,\hZ)$  is contained in $\rho_E(\prod_{p\in\SP}\Cc(E_p,\SZ_p))$.  

Let us prove the reverse containment.
Let $\varphi=(\varphi_p)_{p\in\SP}$ be any element of $\Cc(E,\hZ)$ such that, for each $p\in\SP$, the component $\varphi_p$ may be identified with an element of $\Cc(E_p,\SZ_p)$ that we still denote by $\varphi_p$ (i.e., $\varphi=\ufi_{|E}$, for some $\ufi\in \prod_p\Cc(E_p,\SZ_p)$).   Let $\uO=\prod_{p\in\SP}O_p$ be any basic open neighborhood of 0 in $\hZ$. If $p\in\SP$ is such that $O_p\not=\SZ_p$ (they are at most finitely many), let $k_p\geq 0$ be such that $p^{k_p}\SZ_p\subseteq O_p$. By Theorem~\ref{th:BK} there exists a polynomial function $f_p\in\int(E_p,\SZ_p)$ such that $v_p(\varphi_p(\alpha_p)-f_p(\alpha_p))\geq  k_p$ for every $\alpha_p\in E_p$. For all these $p$, write $f_p(x)=\sum_rc_{p,r}x^r$ with $c_{p,r}\in\SQ_p$. By an extension of the Chinese remainder theorem,
 for each $r\geq 0,$ there exists $c_r\in\SQ$ such that $v_p(c_r-c_{p,r})\geq \sup_{p\in\SP}k_p$ if $O_p\not=\SZ_p$ and $v_p(c_r)\geq 0$ if $O_p=\SZ_p$. Then, for every $p\in\SP$, the polynomial $f(x)=\sum c_rx^r\in\SQ[x]$ satisfies:
\vskip0.3cm
$v_p\left(\varphi_p(\alpha_p)-f(\alpha_p)\right)\geq$
$$\left\{\begin{array}{ll}\inf\{v_p(\varphi_p(\alpha_p)-f_p(\alpha_p)),v_p(f_p(\alpha_p)-f(\alpha_p))\}\geq k_p & \textrm{if }O_p\not=\SZ_p\\
\inf\{\varphi_p(\alpha_p)),v_p(f(\alpha_p))\}\geq 0 & \textrm{if }O_p=\SZ_p\end{array}\right.$$
Thus, $\varphi(\ua)-f(\ua)\in\uO$ for every $\alpha\in E$. Moreover, $f\in\int_{\SQ}(E,\hZ)=\int_{\SQ}(\uE,\hZ)$.
\end{proof}

\begin{remark}\label{rem:C}
At the end of the proof of Theorem~\ref{th:cap2}, we used a kind of Chinese remainder theorem among the different overrings $\int_{\SQ}(E_p,\SZ_p),$  which in fact are localizations of $\int_{\SQ}(\uE,\hZ)$ with respect to $p$ (see formula (\ref{eq:17}) below), namely:

\smallskip

Fix a finite set of primes $P_0=\{p_i\in\SP\mid 1\leq i\leq r\}$ and a corresponding set of positive integers $\{k_i\in\SN^*\mid 1\leq i\leq r\}$. For any $f_i\in\int_{\SQ}(E_{p_i},\SZ_{p_i})$ for $i=1,\ldots,r$, there exists $f\in\int_{\SQ}(\uE,\hZ)$ such that $f\equiv f_i\pmod{p_i^{k_i}}$ for all $p_i\in P_0$ and $f\in\SZ_{(p)}[x]$ for all $p\in\SP\setminus P_0$. Here, $f\equiv  g\pmod{p^k}$ means that $f-g\in p^k\SZ_{(p)}[x]$.
\end{remark}

Now, we consider the $\SQ$-Banach space $\Cc(E,\Aa_f(\SQ))$ and the subring $\SQ[x]$ of polynomial functions. Similarly to the first part of the proof of Theorem~\ref{th:cap2}, we can see that, if $\psi\in\Cc(E,\Aa_f(\SQ))$ is in the topological closure of $\SQ[x]$, then the component $\psi_p$ is a function of only one variable, namely $\alpha_p$, and hence, that $\psi$ may be considered as belonging  to $\rho_E(\prod_{p\in\SP}\Cc(E_p,\SQ_p))$. In fact, $\psi$ belongs to the intersection $$\Cc(E,\Aa_f(\SQ))\, \cap \, \rho_E(\prod_{p\in\SP}\Cc(E_p,\SQ_p))$$
which could be considered as the `restricted product' of the Banach spaces $\Cc(E_p,\SQ_p)$ with respect to the subrings $\Cc(E_p,\SZ_p)$. Now we can state an adelic analogue of Dieudonn\'e's result (Theorem~\ref{th:dieu}):

\begin{theorem}\label{th:dieu1}
Let $E$ be a compact subset of $\Aa_f(\SQ)$. The topological closure of the ring of polynomial functions $\SQ[x]$ in $\Cc(E,\Aa_f(\SQ))$ is equal to the following intersection:
$$\Cc(E,\Aa_f(\SQ))\, \cap \, \rho_E(\prod_{p\in\SP}\Cc(E_p,\SQ_p))\,.$$
\end{theorem}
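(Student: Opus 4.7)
The plan is to prove the equality by double inclusion. The inclusion of the topological closure of $\SQ[x]$ into the intersection is already indicated in the paragraph preceding the statement: repeating the first part of the proof of Theorem~\ref{th:cap2} componentwise, if $\psi$ is a uniform limit in $\Cc(E,\Aa_f(\SQ))$ of polynomials $f_n\in\SQ[x]$ and $\ua,\ub\in E$ satisfy $\alpha_p=\beta_p$, then $(f_n)_p(\ua)=f_n(\alpha_p)=f_n(\beta_p)=(f_n)_p(\ub)$, and passing to the limit gives $\psi_p(\ua)=\psi_p(\ub)$. Hence each $\psi_p$ factors through the projection $E\to E_p$, so $\psi\in\rho_E(\prod_{p\in\SP}\Cc(E_p,\SQ_p))$.

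For the reverse inclusion the plan is to reduce to Theorem~\ref{th:cap2} via the scaling of Lemma~\ref{rem:1}. Given $\psi$ in the intersection, write $\psi=\upsi_{|E}$ with $\upsi=(\upsi_p)\in\prod_p\Cc(E_p,\SQ_p)$ and choose $d,d_1\in\SN^*$ with $dE\subseteq\hZ$ and $d_1\psi\in\Cc(E,\hZ)$, so that $\varphi(\ux):=d_1\psi(\ux/d)$ belongs to $\Cc(dE,\hZ)$. Because $\pi_p(E)=E_p$, the condition $d_1\psi(E)\subseteq\hZ$ forces $d_1\upsi_p(E_p)\subseteq\SZ_p$ for every $p$; hence the function $\ufi$ with components $\ufi_p(y_p):=d_1\upsi_p(y_p/d)$ on $dE_p$ lies in $\prod_p\Cc((dE)_p,\SZ_p)$ and restricts to $\varphi$ on $dE$. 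Theorem~\ref{th:cap2} applied to the compact subset $dE\subseteq\hZ$ then supplies polynomials $g_n\in\int_\SQ(dE,\hZ)\subseteq\SQ[x]$ converging uniformly to $\varphi$ on $dE$. Setting $f_n(x):=d_1^{-1}g_n(dx)\in\SQ[x]$, one has $\psi(\ua)-f_n(\ua)=d_1^{-1}\bigl(\varphi(d\ua)-g_n(d\ua)\bigr)$ on $E$.

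The remaining step is to translate uniform convergence in the $\hZ$-topology into uniform convergence in the finer $\Aa_f(\SQ)$-topology. Given a basic open neighborhood $\uO=\prod_pO_p$ of $0$ in $\Aa_f(\SQ)$, multiplication by $d_1$ is a self-homeomorphism of $\Aa_f(\SQ)$, so $d_1\uO\cap\hZ$ is a basic open neighborhood of $0$ in $\hZ$: its $p$-th component $d_1O_p\cap\SZ_p$ equals $\SZ_p$ for all primes $p$ neither in the finite exceptional set of $\uO$ nor dividing $d_1$. Uniform convergence of $g_n$ to $\varphi$ on $dE$ therefore places $g_n(d\ua)-\varphi(d\ua)$ eventually in $d_1\uO$ for every $\ua\in E$, giving $f_n(\ua)-\psi(\ua)\in\uO$ as desired. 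I do not anticipate a serious obstacle: once one carefully distinguishes the two topologies and tracks the clearing of denominators, everything reduces cleanly to Theorem~\ref{th:cap2}.
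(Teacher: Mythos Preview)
Your proof is correct and follows essentially the same approach as the paper: reduce to Theorem~\ref{th:cap2} via the scaling of Lemma~\ref{rem:1}, then undo the scaling to pull the approximation back from $\Cc(dE,\hZ)$ to $\Cc(E,\Aa_f(\SQ))$. You are actually a bit more careful than the paper in explicitly verifying that $\varphi\in\rho_{dE}\bigl(\prod_p\Cc((dE)_p,\SZ_p)\bigr)$ before invoking Theorem~\ref{th:cap2}, and in tracking why $d_1\uO\cap\hZ$ is a basic $\hZ$-neighborhood; the paper leaves both of these implicit.
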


\begin{proof}
We have just seen that the topological closure of $\SQ[x]$ in $\Cc(E,\Aa_f(\SQ))$ is contained in $\Cc(E,\Aa_f(\SQ))\cap\rho_E(\prod_{p\in\SP}\Cc(E_p,\SQ_p))$. Conversely, let $\psi(x)$ be in the last intersection and let $\uO=\prod_{p\in\SP}O_p$ be any basic open neighborhood of 0 in $\Aa_f(\SQ)$.
By Lemma~\ref{rem:1}(3), there exist $d$ and $d_1\in\SN^*$ such the function $\varphi$ defined by $\varphi(\ux) =d_1\psi(\frac{1}{d}\ux)$ belongs to $\Cc(dE,\hZ)$ where $dE\subseteq\hZ$. Since $d_1\uO=\prod_pd_1O_p$ contains a basic open neighborhood of 0 in $\hZ$, 
Theorem~\ref{th:cap2} shows that there exists $f\in\SQ[x]$ such that $\varphi(\ua)-f(\ua)\in d_1\uO$ for every $\ua\in dE.$ Let $g\in\SQ[x]$ defined by $g(x)=\frac{1}{d_1}f(dx)$. Then, $\psi(\ub)-g(\ub)=\frac{1}{d_1}(\varphi(d\ub)-f(d\ub))\in\uO$ for every $\ub\in E$. Therefore, $\psi(x)$ is in the the topological closure of $\SQ[x]$ in $\Cc(E,\Aa_f(\SQ))$.
\end{proof}

\section{Bases of the $\SZ$-module $\int_{\SQ}(E,\hZ)$}

In order to obtain bases analogous to those of Theorems~\ref{th:mah} and \ref{th:BK}, if there are some, for the $\SQ$-Banach space $\Cc(E,\Aa_f(\SQ))\cap\prod_{p\in\SP}\Cc(E_p,\SQ_p)$, we are looking now for bases of the $\SZ$-module $\int_{\SQ}(E,\hZ)$.

For every compact subset $E$ of $\hZ$, we have that $\int_{\SQ}(E,\hZ)$ is a $\SZ$-module. Does this $\SZ$-module admits a basis?
As noticed in Remark~\ref{rem:compact subsets hZ}, we may assume for simplicity that the compact subset $E$ is of the form $\uE=\prod_p E_p$, and hence, that (see also \cite[(6.1)]{bib:CP}):
\begin{equation}\label{eq:IntQEZ}
\int_{\SQ}(\uE,\hZ)=\bigcap_{p\in\SP}\,\int_{\SQ}(E_p,\SZ_p),
\end{equation}
where $\int_{\SQ}(E_p,\SZ_p)=\{f\in\SQ[x] \mid f(E_p)\subseteq\SZ_p\}$, for each $p\in\SP$. Then, as in the classical case of integer-valued polynomials in number fields, we consider the $n$-th {\em characteristic $\SZ$-module} $\II_{n,\SQ}(E,\hZ)$ of $\int_{\SQ}(E,\hZ)$ which is the set formed by the leading coefficients (denoted by "$\lc$") of the polynomials of degree $\leq n$ in $\int_{\SQ}(E,\hZ):$
$$ \II_n(E)=\II_{n,\SQ}(E,\hZ)=\{\lc(f)\mid f\in\int_{\SQ}(E,\hZ), \deg(f)\leq n\}\,. $$
Clearly,
$$ \SZ=\II_{0}(E)\subseteq \ldots \subseteq \II_{n}(E) \subseteq \II_{n+1}(E) \subseteq \ldots \subseteq \SQ $$

Recall that a sequence of polynomials is said to be a {\em regular basis} in P\'olya's sense~\cite{bib:polya} if this is a basis with exactly one polynomial of each degree.
In our setting, \cite[Proposition II.1.4]{bib:CC} says:

\begin{lemma}\label{th:lemma}
A sequence $\{f_n\}_{n\geq 0}$ of elements of the $\SZ$-module $\int_{\SQ}(E,\hZ)$ is a basis such that $\deg(f_n)=n \;($and hence a regular basis$),$ if and only if, for each $n$, the leading coefficient of $f_n$ generates the $\SZ$-module $\II_{n}(E)$. In particular, $\int_{\SQ}(E,\hZ)$ admits a regular basis if and only if all the $\II_{n}(E)$'s are principal fractional ideals of $\SZ$. 
\end{lemma}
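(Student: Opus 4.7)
The plan is to prove both directions of the equivalence by a triangular induction on degree, exploiting the fact that polynomials of distinct degrees are automatically $\SQ$-linearly independent (and thus $\SZ$-linearly independent).

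For the direction ``$\Rightarrow$'', I would assume $\{f_n\}_{n\geq 0}$ is a $\SZ$-basis of $\int_\SQ(E,\hZ)$ with $\deg(f_n)=n$. On the one hand, $\lc(f_n)\in\II_n(E)$ by definition. On the other hand, given any $g\in\int_\SQ(E,\hZ)$ of degree exactly $n$, the basis hypothesis provides a unique expression $g=\sum_{k=0}^{n}c_k f_k$ with $c_k\in\SZ$; comparing leading coefficients yields $\lc(g)=c_n\lc(f_n)\in\SZ\cdot\lc(f_n)$. Since any element of $\II_n(E)$ arises as such an $\lc(g)$ (possibly with $c_n=0$ when $g$ has strictly smaller degree, in which case the element is $0$), we obtain $\II_n(E)=\SZ\cdot\lc(f_n)$.

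For the direction ``$\Leftarrow$'', suppose that for each $n$ the leading coefficient of $f_n$ generates $\II_n(E)$. Given $g\in\int_\SQ(E,\hZ)$ of degree $n$, write $\lc(g)=c_n\lc(f_n)$ with $c_n\in\SZ$; then $g-c_n f_n$ lies in $\int_\SQ(E,\hZ)$ and has degree strictly less than $n$, so by induction on the degree it is a $\SZ$-linear combination of $f_0,\ldots,f_{n-1}$. This proves that the $f_n$'s generate $\int_\SQ(E,\hZ)$ as a $\SZ$-module. Linear independence is immediate since the polynomials have pairwise distinct degrees over the field $\SQ$.

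Finally, for the ``In particular'' clause: if a regular basis $\{f_n\}$ exists, the first direction shows each $\II_n(E)$ is principal, generated by $\lc(f_n)$. Conversely, if every $\II_n(E)$ is principal, choose a generator $a_n$ and then, by the very definition of $\II_n(E)$, a polynomial $f_n\in\int_\SQ(E,\hZ)$ of degree $n$ with $\lc(f_n)=a_n$; the second direction shows the resulting sequence is a regular basis. There is no real obstacle here beyond keeping track of the induction, which is why the authors simply quote~\cite[Proposition II.1.4]{bib:CC}; the only subtle point worth stating clearly is that the characteristic modules $\II_n(E)$ are fractional ideals of $\SZ$ (so ``principal'' is unambiguous), which follows from the chain $\SZ=\II_0(E)\subseteq\II_n(E)\subseteq\SQ$ together with the fact that each $\II_n(E)$ is a $\SZ$-submodule of $\SQ$.
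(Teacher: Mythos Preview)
Your argument is correct and is exactly the standard triangular-induction proof of this classical fact; the paper itself gives no proof at all, simply quoting \cite[Proposition~II.1.4]{bib:CC}, as you yourself note.

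One small slip in your closing paragraph: it is \emph{not} true that the characteristic modules $\II_n(E)$ are always fractional ideals of $\SZ$. The chain $\SZ=\II_0(E)\subseteq\II_n(E)\subseteq\SQ$ only tells you that $\II_n(E)$ is a $\SZ$-submodule of $\SQ$ containing $\SZ$, and such modules need not be finitely generated; the paper provides an explicit counterexample immediately after Proposition~\ref{th:localizationp}, where $\uE=\prod_p p\SZ_p$ gives $\II_1(\uE)=\sum_{p\in\SP}\frac{1}{p}\SZ$, which is not a fractional ideal. This does not affect your proof, because the phrase ``principal fractional ideal of $\SZ$'' is already unambiguous on its own (it simply means a module of the form $a\SZ$ with $a\in\SQ^*$), so there was nothing that needed clarifying.
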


Note that the $\SZ$-module $\II_{n}(E)$ is a principal fractional ideal of $\SZ$ if and only if it is a finitely generated $\SZ$-module.

Now our next task is to study the characteristic $\SZ$-modules $\II_{n}(E)$. Recall that  the characteristic $\SZ$-modules of $\int_{\SQ}(E_p,\SZ_p)$ are:
$$\II_n(E_p)=\II_{n,\SQ}(E_p,\SZ_p)=\{\lc(f)\mid f\in\int_{\SQ}(E_p,\SZ_p), \deg(f)\leq n\}\,.$$
Note that, for simplicity, in this section we write $\II_n(E)$ instead of $\II_{n,\SQ}(E,\hZ)$ and $\II_n(E_p)$ instead of $\II_{n,\SQ}(E_p,\SZ_p)$, but we have to take care that these subsets depend on the field which contains the coefficients of the polynomials. First, we consider their localization with respect to the multiplicative subset $\SZ\setminus p\SZ$. Given a $\SZ$-module $M$, we denote by $M_{(p)}$ the localization of $M$ at $\SZ\setminus p\SZ$, which is isomorphic to the tensor product $M\otimes_{\SZ}\SZ_{(p)}$.

\begin{proposition}\label{th:localizationp}
For every compact subset $E$ and every $p\in\SP:$
\be\label{eq:17} \int_{\SQ}(E,\hZ) \otimes _\SZ \SZ_{(p)}\cong \int_{\SQ}(E_p,\SZ_p)\,. \ee
In particular,
\be\label{eq:18}   \II_{n}(E) \otimes _\SZ \SZ_{(p)}\cong \II_{n}(E_p)\,, \ee
and hence,
\be\label{eq:19}   \II_{n}(E) = \cap_{p\in\SP} \;\II_{n}(E_p)\,. \ee
\end{proposition}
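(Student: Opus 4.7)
The plan is to establish (\ref{eq:17}) first, then derive (\ref{eq:18}) by passing to leading coefficients, and finally obtain (\ref{eq:19}) from (\ref{eq:18}) via a standard fact about $\SZ$-submodules of $\SQ$. The natural candidate for the map in (\ref{eq:17}) is the one induced by the inclusion $\int_\SQ(E,\hZ)\hookrightarrow\int_\SQ(E_p,\SZ_p)$, which is $\SZ_{(p)}$-linear because $\SZ_{(p)}\subseteq\SZ_p$. Injectivity is formal: since $\SZ_{(p)}$ is flat over $\SZ$, tensoring the inclusion $\int_\SQ(E,\hZ)\hookrightarrow\SQ[x]$ produces an injection $\int_\SQ(E,\hZ)\otimes_\SZ\SZ_{(p)}\hookrightarrow\SQ[x]\otimes_\SZ\SZ_{(p)}=\SQ[x]$, and the map above factors through it since $\int_\SQ(E_p,\SZ_p)\subseteq\SQ[x]$.

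The core of the argument, and the step where I expect the main obstacle to lie, is surjectivity: given $f\in\int_\SQ(E_p,\SZ_p)$, I need to produce $s\in\SZ\setminus p\SZ$ such that $sf\in\int_\SQ(E,\hZ)$, for then $f$ is the image of $(sf)\otimes\tfrac{1}{s}$. The idea is to exploit that $f\in\SQ[x]$ has only finitely many coefficients, so only finitely many primes $q_1,\ldots,q_r$ appear in their denominators. After discarding $p$ from this list and setting $m_i=-\min_k v_{q_i}(c_k)\geq 0$, take $s=\prod_{q_i\ne p} q_i^{m_i}$. By construction $s\in\SZ\setminus p\SZ$ and $sf\in\SZ_{(q)}[x]$ for every prime $q\ne p$, which forces $(sf)(E_q)\subseteq\SZ_q$ since $E_q\subseteq\SZ_q$. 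At $p$ itself, $s$ is a unit in $\SZ_p$ and $f(E_p)\subseteq\SZ_p$, so $(sf)(E_p)\subseteq\SZ_p$. Invoking the decomposition (\ref{eq:IntQEZ}), one concludes that $sf\in\int_\SQ(\uE,\hZ)=\int_\SQ(E,\hZ)$.

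For (\ref{eq:18}) I would pass to leading coefficients of polynomials of degree at most $n$: the same $s$ witnesses surjectivity (since $\lc(sf)=s\,\lc(f)$ and $\deg(sf)=\deg f\leq n$), while injectivity again comes from flatness of $\SZ_{(p)}$ applied to $\II_n(E)\hookrightarrow\SQ$. Finally, (\ref{eq:19}) follows from (\ref{eq:18}) by the general fact that any $\SZ$-submodule $M$ of $\SQ$ satisfies $M=\bigcap_{p\in\SP}M_{(p)}$: for $c\in\bigcap_p M_{(p)}$, the ideal $J_c=\{t\in\SZ\mid tc\in M\}$ is not contained in any $p\SZ$ (since some $s_p\in\SZ\setminus p\SZ$ lies in $J_c$), hence equals $\SZ$ as $\SZ$ is a PID, so $c\in M$. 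Applied to $M=\II_n(E)$ together with the identification $\II_n(E)_{(p)}=\II_n(E_p)$ from (\ref{eq:18}), this yields (\ref{eq:19}).
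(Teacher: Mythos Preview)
Your proof is correct and follows essentially the same route as the paper: both establish (\ref{eq:17}) by showing that any $f\in\int_{\SQ}(E_p,\SZ_p)$ becomes an element of $\int_{\SQ}(E,\hZ)$ after multiplication by a suitable $s\in\SZ\setminus p\SZ$ (the paper writes a common denominator as $d=p^st$ and uses $t$, while you build $s$ directly from the primes in the denominators---the effect is identical), and both invoke (\ref{eq:IntQEZ}) to conclude. The paper simply declares (\ref{eq:18}) and (\ref{eq:19}) to be ``easy consequences,'' whereas you spell out the flatness/surjectivity argument for (\ref{eq:18}) and the standard $M=\bigcap_p M_{(p)}$ fact for (\ref{eq:19}); your added detail is sound.
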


\begin{proof}
The isomorphism in (\ref{eq:17}) is proved once we show the following equality:
\be\label{eq:loc}(\SZ\setminus p\SZ)^{-1}\int_{\SQ}(E,\hZ)=\int_{\SQ}(E_p,\SZ_p),\ee
since $\int_{\SQ}(E,\hZ) \otimes _\SZ \SZ_{(p)}$ is canonically isomorphic to $(\SZ\setminus p\SZ)^{-1}\int_{\SQ}(E,\hZ)$.
Equality~(\ref{eq:loc}) is essentially already contained in \cite{bib:CP}, but for the sake of the reader we give here a self-contained argument.

Clearly the containment $(\SZ\setminus p\SZ)^{-1}\int_{\SQ}(E,\hZ)\subseteq\int_{\SQ}(E_p,\SZ_p)$ holds because $\int_{\SQ}(E,\hZ)$ and $\SZ_{(p)}$ are contained in $\int_{\SQ}(E_p,\SZ_p)$ (see also (\ref{eq:IntQEZ})). We show now that the other containment holds. 

Consider any $f(x)\in \int_{\SQ}(E_p,\SZ_p)$ and let $d$ be a positive integer such that $df(x)\in\SZ[x]$. Write $d=p^st$ where $p\nmid t$ and consider $g(x)=tf(x)$. Then, $g(x)\in\SZ_{(q)}[x]$ for all $q\not=p$, and hence, $g(x)\in\int_{\SQ}(E_q,\SZ_q)$ for all $q\not=p$. Also, $g(x)$ is in $\int_{\SQ}(E_p,\SZ_p)$, because $t$ is invertible in $\SZ_p$. Finally, $g(x)\in\int_{\SQ}(E,\hZ)$, $t\in\SZ\setminus p\SZ$, and $f(x)=\frac{1}{t}g(x)\in (\SZ\setminus p\SZ)^{-1}\int_{\SQ}(E,\hZ).$ Equality (\ref{eq:loc}) is proved. 
The isomorphism (\ref{eq:18}) and the equality (\ref{eq:19}) are easy consequences. 
\end{proof}

If $\int_{\SQ}(E,\hZ)$ admits a regular basis, then all the $\II_{n}(E)$'s are fractional ideals of $\SZ$. In particular, for every $p\in\SP$, all the $\II_{n}(E_p)$ are fractional ideals of $\SZ$, which is equivalent to the fact that the $E_p$'s are infinite since, for $n$ fixed, $\II_n(E_p)$ is a fractional ideal if and only if $\# E_p>n$ (cf. \cite[Exercise II.3]{bib:CC}). The following example shows that this condition is not sufficient.

\begin{example}
If $\uE=\prod_{p\in\SP}\,p\SZ,$ then the polynomials $\frac{1}{p}X$ ($p\in\SP$) are in $\int_{\SQ}(\uE,\hZ)$. Therefore the $\SZ$-module $\II_{1}(\uE)$, which contains (in fact, is equal to) the non-finitely generated $\SZ$-module $\sum_{p\in\SP}\frac{1}{p}\SZ$, is not a fractional ideal of $\SZ$, and $\int_{\SQ}(\uE,\hZ)$ does not admit a regular basis as a $\SZ$-module.
\end{example}

\begin{proposition}
Assume that, for each $p\in\SP$, $E_p$ is infinite and let $\II_{n}(E_p)=p^{-n_p}\SZ_{(p)}$ where $n_p\geq 0$. Then,
\be\label{eq:20} \II_{n}(E)=\sum_{p\in\SP} \frac{1}{p^{n_p}}\SZ\,.\ee
In particular, $\II_{n}(E)$ is a fractional ideal of $\SZ$ if and only if $\II_{n}(E_p)=\SZ_{(p)}$ for all but finitely many $p$'s. If such a condition holds, then 
$$\II_{n}(E)=\frac{1}{\prod_{p\in\SP}\, p^{n_p}}\;Ê\SZ\,.$$
\end{proposition}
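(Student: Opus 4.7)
The plan is to reduce everything to the intersection formula (\ref{eq:19}) and then translate membership in each $\II_n(E_p)$ into a $p$-adic valuation condition. Since $E_p$ is assumed infinite, $\II_n(E_p)$ is a nonzero fractional ideal of $\SZ_{(p)}$, so writing $\II_n(E_p)=p^{-n_p}\SZ_{(p)}$ is legitimate, and as a subset of $\SQ$ it coincides with $\{\alpha\in\SQ\mid v_p(\alpha)\geq -n_p\}$. Hence, by (\ref{eq:19}),
\[
\II_n(E)=\bigcap_{p\in\SP}\{\alpha\in\SQ\mid v_p(\alpha)\geq -n_p\}.
\]

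First I would prove the containment $\sum_{p}\frac{1}{p^{n_p}}\SZ\subseteq \II_n(E)$: for each $q\in\SP$, one has $v_q(1/p^{n_p})=0\geq -n_q$ when $p\neq q$ and $v_q(1/q^{n_q})=-n_q$, so every generator $1/p^{n_p}$ lies in every $\II_n(E_q)$, and the $\SZ$-module they generate is contained in the intersection. For the reverse containment, I would take $\alpha\in\SQ$ satisfying $v_p(\alpha)\geq -n_p$ for all $p$. Writing $\alpha=a/b$ in lowest terms, the prime factorization $b=\prod_p p^{b_p}$ satisfies $b_p\leq n_p$ for every $p$; by the classical partial fraction decomposition of a rational number over $\SZ$, there exist an integer $m$ and integers $a_p$ (almost all zero) such that
\[
\alpha=m+\sum_{p\mid b}\frac{a_p}{p^{b_p}}=m+\sum_{p\mid b}\frac{a_p\,p^{n_p-b_p}}{p^{n_p}}\,,
\]
which manifestly lies in $\sum_p \frac{1}{p^{n_p}}\SZ$ (the integer $m$ being absorbed into any one summand, or being in $\SZ\subseteq\frac{1}{p^{n_p}}\SZ$). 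This establishes equation (\ref{eq:20}).

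For the characterization as a fractional ideal, I would argue as follows. If $n_p=0$ for all but finitely many $p$, then the sum $\sum_p\frac{1}{p^{n_p}}\SZ$ is generated by $\SZ$ together with finitely many $\frac{1}{p^{n_p}}$; since $\SZ$ is a PID and the finite sum of principal fractional ideals is principal, equal to the one generated by their common denominator, we obtain $\II_n(E)=\frac{1}{\prod_{p}p^{n_p}}\SZ$ (an empty product interpreted as $1$). Conversely, if $n_p>0$ for infinitely many $p$, then $\II_n(E)$ contains $1/p^{n_p}$ for infinitely many primes $p$, which forces any denominator of a candidate generator to be divisible by infinitely many primes, an impossibility; hence $\II_n(E)$ is not a fractional ideal of $\SZ$.

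The only step requiring any care is the partial-fraction step in the reverse containment; everything else is immediate from the valuation translation of (\ref{eq:19}) and the fact that $\SZ$ is a PID.
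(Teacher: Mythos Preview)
Your argument is correct, but it follows a different route from the paper's. The paper establishes $M:=\sum_p p^{-n_p}\SZ\subseteq\II_n(E)$ constructively: starting from a polynomial $f\in\int_\SQ(E_p,\SZ_p)$ of degree $n$ with leading coefficient $p^{-n_p}$, it invokes the Chinese remainder argument of Remark~\ref{rem:C} to produce $g\in\int_\SQ(E,\hZ)$ with the same leading coefficient, thereby exhibiting $p^{-n_p}$ as an actual leading coefficient. For the reverse inclusion the paper uses a pure localization argument: since $M_{(p)}=p^{-n_p}\SZ_{(p)}=\II_n(E_p)\cong\II_n(E)_{(p)}$ for every $p$ (by (\ref{eq:18})), the two $\SZ$-submodules of $\SQ$ must coincide. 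Your approach instead rests entirely on the intersection formula~(\ref{eq:19}), translating it into the valuation description $\II_n(E)=\{\alpha\in\SQ\mid v_p(\alpha)\geq -n_p\ \forall p\}$; the forward inclusion then becomes a one-line valuation check, and the reverse is handled by the partial-fraction decomposition of a rational number. Your path is more elementary and self-contained (no appeal to Remark~\ref{rem:C}), while the paper's path is more constructive on one side and, via localization, would transfer verbatim to any Dedekind base ring rather than relying on features specific to~$\SZ$.
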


Note that  (\ref{eq:20}) is equivalent to the following equalities:
$$\forall p\in\SP,\quad \quad v_p(\II_n(E))=v_p(\II_n(E_p))$$
where $v_p(\II_n(E))=\inf_{a\in\II_n(E)}v_p(a)$ even if $\II_n(E)$ is not a fractional ideal of $\SZ$.

\begin{proof}
We just have to prove equality (\ref{eq:20}). By hypothesis, there exists a polynomial $f(x)$ in $\int_{\SQ}(E_p,\SZ_p)$ of degree $n$ and leading coefficient $p^{-n_p}$, thus $p^{n_p}f(x)\in \SZ_{(p)}[x]$ (see~\cite[Proposition II.1.7]{bib:CC}). We apply now  Remark   \ref{rem:C}: there exists $g\in \int_{\SQ}(E,\hZ)$ of the same degree as the degree of $f(x)$ such that $g\equiv f\pmod{p}$ and $g\in\SZ_{(q)}$, for every prime $q\not=p$. It is easy to see that this implies that the leading coefficient of $f(x)$ is equal to $p^{-n_p}$, so that the $\SZ$-module $M=\sum_{p\in\SP}p^{-n_p}\SZ$ is contained in $\II_{n}(E)$.
Conversely, for each $p\in\SP$, we have the equalities:

\centerline{$M\otimes_{\SZ}\SZ_{(p)}\cong M_{(p)} =\frac{1}{p^{n_p}}\SZ_{(p)}=\II_{n}(E_p)\cong \II_{n}(E)\otimes_{\SZ}\SZ_{(p)}$,} 

\noindent which implies $M=\II_{n}(E)$. 
\end{proof}

\begin{corollary}\label{characteristic ideal fractional}
For every compact subset $E$ and every integer $n$, $\II_{n}(E)$ is a fractional ideal of $\SZ$ if and only if both following conditions hold:
\begin{enumerate}
\item $\# E_p > n$ for all $p\in\SP$,
\item $\#(\, E_p \pmod{p}) > n$ for all but finitely many $p\in\SP$. 
\end{enumerate} 
\end{corollary}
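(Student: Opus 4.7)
The plan is to combine the preceding Proposition with a local analysis at each prime $p$ characterizing when $\II_n(E_p)=\SZ_{(p)}$.

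I would first deduce the necessity of~(1). By formula~(\ref{eq:18}), $\II_n(E_p)\cong\II_n(E)\otimes_\SZ\SZ_{(p)}$, so if $\II_n(E)$ is a fractional ideal of $\SZ$ then each localization $\II_n(E_p)$ is a fractional ideal of $\SZ_{(p)}$; as already recalled in the text (cf.\ \cite[Exercise II.3]{bib:CC}), this is equivalent to $\#E_p>n$, so condition~(1) follows.

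Assume now~(1). Then each $E_p$ is infinite, the preceding Proposition applies, and one has $\II_n(E_p)=p^{-n_p}\SZ_{(p)}$ for integers $n_p\geq 0$, with $\II_n(E)$ a fractional ideal of $\SZ$ iff $n_p=0$ for all but finitely many~$p$. The corollary thus reduces to the local equivalence
$$ n_p=0 \iff \#(E_p\pmod p)>n. $$
To prove this, I would invoke Theorem~\ref{th:BK}: picking a $p$-ordering $(a_k)_{k\geq 0}$ of $E_p$, the polynomial $f_n(x)=\prod_{k<n}(x-a_k)/(a_n-a_k)$ has leading coefficient generating $\II_n(E_p)$ over $\SZ_{(p)}$, so $n_p=v_p\bigl(\prod_{k<n}(a_n-a_k)\bigr)$. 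For $(\Leftarrow)$, given $\#(E_p\pmod p)>n$ I would lift $n+1$ distinct residue classes to elements $a_0,\dots,a_n$ of $E_p$ with $v_p(a_i-a_j)=0$ for $i\neq j$; since $v_p\geq 0$ always, this sequence is an initial segment of a $p$-ordering, yielding $n_p=0$. For $(\Rightarrow)$, if $\#(E_p\pmod p)\leq n$ then among the first $n+1$ terms of any $p$-ordering two are congruent mod~$p$ by pigeonhole; combined with the classical monotonicity of the Bhargava factorial valuations, this forces $n_p\geq 1$.

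The main obstacle I anticipate is the monotonicity of $s\mapsto v_p\bigl(\prod_{k<s}(a_s-a_k)\bigr)$ used in the backward direction, which is not recorded explicitly earlier in the paper; it would need either a short self-contained argument from the minimization property defining a $p$-ordering or a direct appeal to \cite{bib:CC}. Once this is in hand, the two implications, together with the preceding Proposition, assemble into the stated equivalence.
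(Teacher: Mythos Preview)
Your overall strategy matches the paper's: reduce to the local equivalence $\II_n(E_p)=\SZ_{(p)}\iff \#(E_p\pmod p)>n$, and handle the forward implication by noting that $n+1$ elements in distinct residue classes form the beginning of a $p$-ordering. There is, however, a small slip: condition~(1) only gives $\#E_p>n$, not that $E_p$ is infinite, so you cannot literally invoke the preceding Proposition as stated. This is harmless, since what you actually need is that $\II_n(E_p)$ is a fractional ideal of $\SZ_{(p)}$, which follows from $\#E_p>n$; the argument of the Proposition goes through verbatim under this weaker hypothesis.

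The genuine difference is in the contrapositive direction $\#(E_p\pmod p)\leq n\Rightarrow \II_n(E_p)\neq\SZ_{(p)}$. You argue via pigeonhole plus monotonicity of the $p$-sequence $s\mapsto v_p\bigl(\prod_{k<s}(a_s-a_k)\bigr)$, which is correct but, as you note, requires importing a fact not established in the paper. The paper instead bypasses monotonicity by a direct construction: if $k=\#(E_p\pmod p)\leq n$ and $a_0,\dots,a_{k-1}\in\SZ$ represent the residue classes meeting $E_p$, then
\[
f(x)=\tfrac{1}{p}\,x^{n-k}(x-a_0)\cdots(x-a_{k-1})
\]
lies in $\int_\SQ(E_p,\SZ_p)$, since every $\alpha\in E_p$ is congruent to some $a_i$ modulo $p$, and hence $\tfrac{1}{p}\in\II_n(E_p)$. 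This is shorter and entirely self-contained; your route works too but trades an elementary one-line construction for an appeal to a structural property of $p$-orderings.
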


\begin{proof}
It is enough to show that $\II_{n}(E_p)=\SZ_{(p)}$ if and only if $\#(E_p \pmod{p})>n$.  Let $\#(\,E_p \pmod{p}) =k$ and let $\alpha_0,\ldots,\alpha_{k-1}$ be elements of $E_p$ non-congruent modulo $p$. Assume that $k\leq n$, if $a_0,\ldots,a_{k-1}$ are elements of $\SZ$ such that $a_i\equiv \alpha_i\pmod{p}$, then $f(x)=\frac{1}{p}x^{n-k}(x-a_0)\ldots(x-a_{k-1})\in\int_{\SQ}(E_p,\SZ_p)$ and $\frac{1}{p}\in\II_{n}(E_p)$. Assume now that $k\geq n+1$, then whatever $1\leq l\leq n+1$, $v_p(\prod_{h=0}^{l-1}(\alpha_l-\alpha_h))=0$, which implies that $\alpha_0,\ldots, \alpha_{n+1}$ is the beginning of a $p$-ordering of $E_p$ and, in particular, the fact that $v_p(\prod_{h=0}^n(\alpha_{n+1}-\alpha_h))=0$ means that $\II_{n}(E_p)=\SZ_{(p)}$ (about $p$-orderings and links with characteristic ideals see \cite{bib:Bharg1}). 
\end{proof}

Lemma~\ref{th:lemma} with the previous corollary implies that:

\begin{corollary}\label{th:mod}
For every compact subset $E$ of $\hZ$, the $\SZ$-module $\int_{\SQ}(E,\hZ)$ admits a regular basis if and only if, both following conditions hold:
\begin{enumerate}
\item $\# E_p$ is infinite for all $p\in\SP$,
\item for each n, $\#(\,E_p \pmod{p}) > n$ for all but finitely many $p\in\SP$.
\end{enumerate}  
\end{corollary}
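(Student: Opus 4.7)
The plan is to assemble the result directly from Lemma~\ref{th:lemma} and Corollary~\ref{characteristic ideal fractional}, with one small observation about $\SZ$ that makes the two statements match up.

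First I would note that, since $\SZ$ is a principal ideal domain, every fractional ideal of $\SZ$ is automatically principal: if $I$ is a fractional ideal then by definition there exists $d\in\SZ\setminus\{0\}$ with $dI\subseteq\SZ$, so $dI$ is an ideal of $\SZ$ hence principal, and therefore $I$ itself is principal. Consequently, Lemma~\ref{th:lemma} can be restated as: $\int_{\SQ}(E,\hZ)$ admits a regular basis as a $\SZ$-module if and only if, for every $n\geq 0$, the characteristic module $\II_{n}(E)$ is a fractional ideal of $\SZ$.

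With this reformulation in hand, the equivalence is obtained by letting $n$ run over $\SN$ in Corollary~\ref{characteristic ideal fractional}. For the direct implication, if $\int_{\SQ}(E,\hZ)$ admits a regular basis, then $\II_{n}(E)$ is a fractional ideal of $\SZ$ for every $n$, and Corollary~\ref{characteristic ideal fractional} gives, for each fixed $n$, the inequality $\#E_p>n$ for all $p$ and $\#(E_p\!\!\!\pmod p)>n$ for all but finitely many $p$. Taking this for every $n$ yields exactly the two conditions of the statement: $E_p$ is infinite for every $p\in\SP$, and for each $n$, $\#(E_p\!\!\!\pmod p)>n$ for all but finitely many $p$. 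For the converse, conditions (1) and (2) of the present statement imply, for every fixed $n\geq 0$, the two conditions of Corollary~\ref{characteristic ideal fractional}, so $\II_{n}(E)$ is a fractional ideal of $\SZ$ for each $n$, and the reformulated Lemma~\ref{th:lemma} yields a regular basis.

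There is essentially no obstacle here: the work has been done in Corollary~\ref{characteristic ideal fractional} (translating the fractional ideal property into the combinatorial conditions on the fibres $E_p$) and in Lemma~\ref{th:lemma} (translating the existence of a regular basis into the principality of the characteristic modules). The only point worth emphasising is the quantifier switch: the two displayed conditions are a uniform restatement, over all $n$, of the pointwise conditions of Corollary~\ref{characteristic ideal fractional}, where condition~(1) collapses into the assertion that $E_p$ is infinite for every $p$.
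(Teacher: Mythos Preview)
Your proof is correct and follows exactly the approach indicated in the paper, which simply states that the corollary follows from Lemma~\ref{th:lemma} together with Corollary~\ref{characteristic ideal fractional}. You have just made explicit the bridge (that over the PID $\SZ$ a fractional ideal is automatically principal, so the principality condition of Lemma~\ref{th:lemma} reduces to the fractional-ideal condition of Corollary~\ref{characteristic ideal fractional}) and the routine quantifier manipulation that the paper leaves to the reader.
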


\begin{remark}
We recall that the family of overrings of $\int(\SZ)=\{f\in\SQ[x] \mid f(\SZ)\subseteq\SZ\}$ contained in $\SQ[x]$ is formed exactly by the rings $\int(\uE,\hZ)$, as $\uE$ ranges through the subsets of $\hZ$ of the form $\prod_pE_p$, where for each prime $p$, $E_p$ is a closed subset of $\SZ_p$ (see \cite[Theorem 6.2]{bib:CP}). Among these rings we can find the subfamily of integer-valued polynomials rings over an infinite  subset $F$ of $\SZ$, that is, $\int(F,\SZ)=\{f\in\SQ[x]\mid f(F)\subseteq\SZ\}$. As already recalled just after Proposition~\ref{th:localizationp}, each ring of the last subfamily has a regular basis (if $F$ is an infinite subset of $\SZ$, each of the characteristic ideals of $\int(F,\SZ)$ is a finitely generated $\SZ$-module \cite[Corollary II.1.6]{bib:CC}). We show in Example \ref{overring with regular basis} below that there are rings  $\int(\uE,\hZ)$ which have a regular basis but are not of the form $\int(F,\SZ)$ for any infinite subset $F$ of $\SZ$.
\end{remark}
\begin{example}\label{overring with regular basis}
For each $p\in\SP$, choose $\alpha_p\in\SZ_p\setminus\SZ_{(p)}$ and let $E_p=\{\alpha_p+j+p^n\mid 0\leq j\leq p-1, n\geq 0\}$.
The set $E_p$ is $p$-adicaly closed since it is the union of $p$ convergent sequences. Moreover, $\#(E_p \pmod{p})=p,$ and hence, $\int_{\SQ}(\prod_pE_p,\hZ)$ has regular bases although it is not of the form $\int(F,\SZ)$ for some $F\subseteq \SZ:$ in fact, $E_p\cap\SZ=\emptyset$ for each $p\in\SP$, so that $\bigcap_{p\in\SP}(E_p\cap\SZ)$ is not dense in $E_p$ for each $p\in\SP$. By \cite[Corollary 6.9]{bib:CP}, the ring $\int_{\SQ}(\prod_pE_p,\hZ)$ cannot be represented as $\int(F,\SZ)$ for any $F\subseteq \SZ$.
\end{example}

If the $\SZ$-module $\int_{\SQ}(E,\hZ)$ admits a regular basis $\{f_n\}_{n\geq 0}$, then, clearly, such a basis is also a regular basis of the $\SZ_{(p)}$-module $\int_{\SQ}(E_p,\SZ_p)$ for every $p\in\SP$, by (\ref{eq:17}). Conversely, from regular bases $\{f_{n,p}\}_{n\geq 0}$ of $\int_{\SQ}(E_p,\SZ_p)$ for every $p\in\SP$, one may construct a regular basis of $\int_{\SQ}(E,\hZ).$ To show how we can do this we first recall how one can construct a basis of the $\SZ_p$-module $\int(E_p,\SZ_p)=\int_{\SQ_p}(E_p,\SZ_p)=\{f\in\SQ_p[x] \mid f(E_p)\subseteq\SZ_p\}$ by means of the notion of Bhargava's $p$-ordering. From this basis we will deduce a basis for the $\SZ_{(p)}$-module $\int_{\SQ}(E_p,\SZ_p)=\int(E_p,\SZ_p)\cap\SQ[x]$ using the fact that
\begin{equation}\label{eq:completion}
\int_{\SQ}(E_p,\SZ_p)\otimes_{\SZ}\SZ_p\cong\int(E_p,\SZ_p)
\end{equation}

Recall that a $p$-{\em ordering} of a subset $E_p$ of $\SZ_p$ is a sequence $\{a_{n}\}_{n\geq 0}$ of elements of $E_p$ such that:
$$\forall n\geq 1,\quad  v_p(\prod_{k=0}^{n-1}(a_n-a_k))=\min_{x\in E_p} v_p(\prod_{k=0}^{n-1}(x-a_k))\,.$$

Such a sequence always exists, and its elements are distinct if and only if $E_p$ is infinite. Then, it is quite obvious that the polynomials
$$ f_{0,p}=1\quad\textrm{and,\;for }n\geq 1, \;f_{n,p}(x)=\prod_{k=0}^{n-1}\frac{x-a_k}{a_n-a_k}  $$
form a regular basis of the $\SZ_p$-module $\int(E_p,\SZ_p)$ (cf. Bhargava~\cite{bib:bhargava2000}). 

For each $n\in\SN$, we set
$$g_{n,p}(x)=\prod_{k=0}^{n-1}(x-a_k),\;\;w_p(n)=v_p(\prod_{k=0}^{n-1}(a_n-a_k)).$$
It is clear that $g_{n,p}(E_p)\subseteq p^{w_p(n)}\SZ_p$. Let $h_{n,p}\in \SZ[x]$ be of degree $n$ such that $h_{n,p}\equiv g_{n,p}\pmod{p^{w_p(n)}}$. It is immediate to see that $$\frac{h_{n,p}(x)}{p^{w_p(n)}}\in \int_{\SQ}(E_p,\SZ_p).$$
Therefore the polynomials $h_{n,p}(x)/p^{w_p(n)}$, $n\in\SN$, form a basis of $\int_{\SQ}(E_p,\SZ_p)$.

\medskip

\noindent{\bf Construction of a $\SZ$-basis of $\int_{\SQ}(E,\hZ)$}.

Suppose that for each $p\in\SP$, $E_p$ is an infinite subset of $\SZ_p$ and let $\{f_{n,p}\}_{n\in\SN}$ be a regular basis of $\int_{\SQ}(E_p,\SZ_p)$. For a fixed $n$, let $P_n$ be the set of primes $p$ such that $\#(E\pmod{p})\leq n$. By Corollary~\ref{th:mod}, $P_n$ is finite. By the Chinese remainder theorem (cf. Remark~\ref{rem:C}), there exists $f_n\in\int_{\SQ}(E,\hZ)$ such that $f_n\equiv f_{n,p} \pmod{p}$ for every $p\in P_n$ and $f_n\in\SZ_{(q)}[x]$ for every $q\in\SP\setminus P_n$. Let $c_n$ and $c_{n,p}$ denote the leading coefficients of $f_n$ and $f_{n,p}$, respectively. Then, for $p\in P_n$, $v_p(c_{n,p})<0$ and $v_p(c_n-c_{n,p})>0$ implies $v_p(c_n)=v_p(c_{n,p})=v_p(\II_n(E_p))=v_p(\II_n(E))$. Write $c_n=\frac{a}{b}$ where $(a,b)=1$. Consequently, for every $p\in P_n$, $v_p(c_n)=-v_p(b)=v_p(\II_n(E))$ and, for every $q\in\SP\setminus P_n$,   $v_q(c_n)=v_q(a)\geq 0$ . Let $u,v\in\SZ$ be such that $ua+vb=1$ and consider the polynomial $g_n=uf_n+vx^n\in\int_{\SQ}(E,\hZ)$ with $\deg(g_n)=n$ and leading coefficient $\frac{1}{b}$. Finally, by Lemma~\ref{th:lemma}, since $v_p(\frac{1}{b})=v_p(\II_n(E))$ for all $p\in\SP$, the $g_n$'s form a regular basis of $\int_{\SQ}(E,\hZ)$.

\medskip

The next remark explains why, finally, the ring $\int_{\SQ}(E,\hZ)$ is not enough for the purpose of writing any element of the $\SQ$-Banach space
$\prod_{p\in\SP}\Cc(E_p,\SQ_p)$ as a sum of a series in the elements of a regular basis of $\int_{\SQ}(E,\hZ)$ with integer coefficients.

\begin{remark}
Assume that the $\SZ$-module $\int_{\SQ}(E,\hZ)$ admits a regular basis $\{f_n\}_{n\geq 0}$. Although, by Theorem~\ref{th:cap2}, the $\SZ$-linear combinations of the $f_n$'s may uniformly approximate every element $\underline{\varphi}=(\varphi_p)_{p\in\SP}$ of the ring $\prod_{p\in\SP}\Cc(E_p,\SZ_p),$
these $\SZ$-linear combinations $\sum_{n=0}^{d}c_n f_n$ are not enough to allow us to write such a $\underline{\varphi}$ as the sum of a series in the $f_n$'s with coefficients in $\SZ$. The reason is that the coefficients $c_n$'s of the $f_n$'s strongly depend on the neighborhood $O$ chosen for the approximation of $\underline{\varphi}:$ it follows from Theorem~\ref{th:BK} that, for a decreasing sequence of neighborhoods of $0$ of the form $O_l=p_0^l\SZ_{p_0}\times\prod_{p\not=p_0}\SZ_p$, the coefficients $c_n$ are uniquely determined by the component $\varphi_{p_0}$ of $\underline{\varphi}$. And, generally, they are different for another component $\varphi_p$ since the components of $\underline{\varphi}$ may be chosen independently of each other. Thus, we will have to consider coefficients in $\Aa_f(\SQ)$ instead of $\SQ$ and this leads to the next section.
\end{remark}

\section{Polynomials with adelic coefficients}

Let us consider now polynomials $\ug(x)$ with coefficients in $\Aa_f(\SQ):$
$$\ug(x)=\sum_{k=0}^n \underline{\gamma}_kx^k, \textrm{ where } \underline{\gamma}_k=(\gamma_{k,p})_{p\in\SP}\in\Aa_f(\SQ).$$
As before, let $\pi_p$ be the canonical projection from $\Aa_f(\SQ)[x]$ to $\SQ_p[x]$. For a polynomial $\ug(x)$ in $\Aa_f(\SQ)[x]$ we consider its components $\pi_p(\ug(x))=g_p(x)\in \SQ_p[x]$, so that we have:
\begin{equation}\label{eq:adelic polynomial}
\ug=(g_p)_{p\in\SP}, \textrm{ where } g_p(x)=\sum_{k=0}^n \gamma_{k,p}x^kÊ\textrm{ with } \gamma_{k,p}\in\SQ_p\,
\end{equation}
corresponding to the containment $\Aa_f(\SQ)[x]\subset \prod_{p\in\SP}\SQ_p[x]$. Note that the last containment is strict, even if we consider the restricted product of the $\SQ_p[x]$'s with respect to the $\SZ_p[x]$'s because of the degrees of the $g_p$'s which are bounded: for every $\ug\in\Aa_f(\SQ)[x]$, $\deg(g_p)\leq \deg(\ug)$ for all $p$.
\smallskip

Similarly to the case of $\SQ[x]$, for $\underline{g}\in\Aa_f(\SQ)[x]$ and $\ua\in \Aa_f(\SQ)$, we may consider the value of $\underline{g}(x)$ at $\ua:$
$$\underline{g}(\ua)=\sum_{k=0}^n \uga_k\ua^k=\left(\sum_{k=0}^n\gamma_{k,p}\alpha_p^k\right)_{p\in\SP}=(g_p(\alpha_p))_{p\in\SP}\,.$$
Thus, every  $\underline{g}\in\Aa_f(\SQ)[x]$ may be considered as a function from $\Aa_f(\SQ)$ into itself:
\begin{align*}
\Aa_f(\SQ)\to &\Aa_f(\SQ)\\
\ua=(\alpha_p)_{p\in\SP}\mapsto & \ug(\ua)=(g_p(\alpha_p))_{p\in\SP}
\end{align*}
and, analogously to the containment $\SQ[x]\subset\Cc(E,\Aa_f(\SQ)),$ we have the  containment $\Aa_f(\SQ)[x]\subset \Cc(E,\Aa_f(\SQ))$ for any subset $E$ of $\Aa_f(\SQ)$.

\smallskip

From now on, for simplicity, we will consider only compact subsets of $\Aa_f(\SQ)$ of the form $\uE=\prod_{p\in\SP}E_p$.
Then, as for $\SQ[x]$, we also have the following containment: 
$$\Aa_f(\SQ)[x] \subset \prod_{p\in\SP}\Cc(E_p,\SQ_p)\,.$$
Indeed, let $\ug=(g_p)_{p\in\SP}\in\Aa_f(\SQ)[x]$ and fix some $p\in\SP$. The fact that $(\ug(\ua))_p=g_p(\alpha_p)$ means that the value of the $p$-component $g_p(x)$ of $\ug(x)$ at $\ua$ depends only on $\alpha_p$, that is, $g_p\in \Cc(E_p,\SQ_p)$.
 Putting together all these containments, we obtain:
\be\label{eq:cont1} \SQ[x]\subset \Aa_f(\SQ)[x] \subset \Cc(\uE,\Aa_f(\SQ))\cap\prod_{p\in\SP}\Cc(E_p,\SQ_p)\subset \Cc(\uE,\Aa_f(\SQ))\,,\ee
which shows in particular that the topological closure of $\Aa_f(\SQ)[x]$ in $ \Cc(\uE,\Aa_f(\SQ))$ with respect to the uniform convergence topology is equal to topological closure of $\SQ[x]$, namely, $\Cc(\uE,\Aa_f(\SQ))\cap\prod_{p\in\SP}\Cc(E_p,\SQ_p)$ (Theorem \ref{th:dieu1}).

\medskip

Considering now polynomial functions with values in $\hZ$ instead of $\Aa_f(\SQ)$ (note that $\Aa_f(\SQ)$ is equal to $\SQ\otimes_{\SZ}\hZ$\,, the localization of $\hZ$ at $\SZ\setminus\{0\}$), we are led to introduce the following polynomial ring of integer-valued polynomials on $\uE=\prod_pE_p:$
$$\int_{\Aa_f(\SQ)}(\uE,\hZ):=\{\ug\in\Aa_f(\SQ)[x]\mid \ug(\uE)\subseteq \hZ\}\,.$$
Clearly, by definition, for any $\ug=(g_p)_p\in\Aa_f(\SQ)[x]$, where the $g_p$'s are the components of $\ug$ as in (\ref{eq:adelic polynomial}), we have $\ug\in\int_{\Aa_f(\SQ)}(\uE,\hZ)$ if and only if $g_p(E_p)\subseteq\SZ_p$, for each $p\in\SP.$ Consequently, for every $\ug=(g_p)_p\in\int_{\Aa_f(\SQ)}(\uE,\hZ)$, for all but finitely many $p\in\SP$, $g_p\in\SZ_p[x]$ and, for the other $p$, $g_p\in\int_{\SQ_p}(E_p,\SZ_p)$, in other words, $\ug$ belongs to the restricted product of the rings $\int_{\SQ_p}(E_p,\SZ_p)$ with respect to the subrings $\SZ_p[x]$. Note that
$$\int_{\Aa_f(\SQ)}(\uE,\hZ)\cap\SQ[x]=\int_{\SQ}(\uE,\hZ).$$

By intersection with $\Cc(\uE,\hZ),$ containments (\ref{eq:cont1}) lead to:
$$\int_{\SQ}(\uE,\hZ)\subset \int_{\Aa_f(\SQ)}(\uE,\hZ) \subset  \prod_{p\in\SP}\Cc(E_p,\SZ_p) \subset \Cc(\uE,\hZ)\,.$$
\smallskip
Once again, in order to study $\int_{\Aa_f(\SQ)}(\uE,\hZ)$, we introduce its characteristic modules:
$$\II_{n,\Aa_f}(\uE)= \II_{n,\Aa_f(\SQ)}(\uE,\hZ):=\{\lc(\ug)\mid \ug\in\int_{\Aa_f(\SQ)}(\uE,\hZ), \deg(\ug)\leq n\}\,. $$
This is a sub-$\SZ$-module of $\Aa_f(\SQ)$. It is easy  to see that
$$\pi_p(\II_{n,\Aa_f}(\uE))=\II_{n,\SQ_p}(E_p)$$
where
$$\II_{n,\SQ_p}(E_p):=\{\lc(f)\mid f\in\int_{\SQ_p}(E_p,\SZ_p), \deg(f)\leq n\}\,.$$

Indeed, if $\ug\in \int_{\Aa_f(\SQ)}(\uE,\hZ)$ then, for every $p\in\SP$, $g_p\in\int_{\SQ_p}(E_p,\SZ_p)$, and hence, 
$\pi_p(\II_{n,\Aa_f}(\uE))\subseteq \II_{n,\SQ_p}(E_p)$. 
Conversely, if $c\in\II_{n,\SQ_p}(E_p)$, there exists $f\in\int_{\SQ_p}(E_p,\SZ_p)$ with leading coefficient $c$. 
Let $\ug\in\int_{\Aa_f(\SQ)}(\uE,\hZ)$ be such that $g_p=f$ and $g_q=0$ for all $q\not= p$, then $\pi_p(\lc(\ug))=c$.
 
Recall that:
$$\II_n(E_p)=\{\lc(f)\mid f\in\int_{\SQ}(E_p,\SZ_p), \deg(f)\leq n\}\,,$$
$$\II_n(\uE)=\{\lc(\ug)\mid \ug\in\int_{\SQ}(\uE,\hZ), \deg(\ug)\leq n\}\,.$$
We saw in Proposition \ref{th:localizationp} that
$$\II_n(\uE)\otimes_{\SZ}\SZ_{(p)} \cong \II_n(E_p)$$
and it is straightforward that (see (\ref{eq:completion}))
$$\II_n(E_p)\otimes_{\SZ}\SZ_p  \cong \II_{n,\SQ_p}(E_p)\,.$$
The next proposition gives the link between $\int_{\Aa_f(\SQ)}(\uE,\hZ)$ and $\int_{\SQ}(\uE,\hZ):$ 

\begin{theorem}\label{th:52}
For every $\uE=\prod_{p\in\SP}E_p\subseteq\Aa_f(\SQ)$, we have
$$\int_{\Aa_f(\SQ)}(\uE,\hZ) \cong \int_{\SQ}(\uE,\hZ)\otimes_{\SZ}\hZ\,.$$
In particular, 
$$\II_{n,\Aa_f}(\uE) \cong \II_n(\uE)\otimes_{\SZ}\hZ,\;\forall n\geq 0,$$
and if $\{f_n\}_{n\in\SN}$ is a regular basis of the $\SZ$-module $\int_{\SQ}(\uE,\hZ)$, then $\{f_n\}_{n\in\SN}$ is a regular basis of the $\hZ$-module $\int_{\Aa_f(\SQ)}(\uE,\hZ)$.
\end{theorem}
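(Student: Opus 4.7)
The plan is to exhibit the canonical $\hZ$-linear map
$$\Phi:\int_{\SQ}(\uE,\hZ)\otimes_{\SZ}\hZ\longrightarrow\int_{\Aa_f(\SQ)}(\uE,\hZ),\qquad f\otimes\uc\mapsto\uc\cdot f,$$
and show that it is an isomorphism. Well-definedness is clear since, for every $\ua\in\uE$, $(\uc f)(\ua)=\uc\,f(\ua)\in\hZ$. For injectivity, note that $\hZ=\prod_{p\in\SP}\SZ_p$ is torsion-free over the PID $\SZ$, hence flat; tensoring the inclusion $\int_{\SQ}(\uE,\hZ)\hookrightarrow\SQ[x]$ with $\hZ$ therefore yields an injection $\int_{\SQ}(\uE,\hZ)\otimes_{\SZ}\hZ\hookrightarrow\SQ[x]\otimes_{\SZ}\hZ=\Aa_f(\SQ)[x]$, through which $\Phi$ factors.

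For surjectivity I would argue by induction on the degree $n$ of $\ug\in\int_{\Aa_f(\SQ)}(\uE,\hZ)$. The case $n=0$ is immediate: $\ug=\uga_0\in\hZ$ gives $\ug=\Phi(1\otimes\uga_0)$. The inductive step reduces to the following key lemma: every $\uga_n\in\II_{n,\Aa_f}(\uE)$ can be written as a \emph{finite} sum $\uga_n=\sum_i c_i\uc_i$ with $c_i\in\II_n(\uE)$ and $\uc_i\in\hZ$. Granted this, choose polynomials $h_i\in\int_{\SQ}(\uE,\hZ)$ of degree $n$ with leading coefficient $c_i$; then $\ug-\sum_i\uc_i h_i$ lies in $\int_{\Aa_f(\SQ)}(\uE,\hZ)$, has degree strictly less than $n$, and belongs to the image of $\Phi$ by the inductive hypothesis, whence so does $\ug$.

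The main obstacle is the key lemma itself. Given $\uga_n=(\gamma_{n,p})_p\in\II_{n,\Aa_f}(\uE)$, let $P=\{p\in\SP\mid v_p(\gamma_{n,p})<0\}$, a finite set since $\uga_n\in\Aa_f(\SQ)$. Decompose $\uga_n=\ub+\sum_{p\in P}\uga_n^{(p)}$, where $\ub\in\hZ$ agrees with $\uga_n$ off $P$ and vanishes on $P$, while $\uga_n^{(p)}$ has $\gamma_{n,p}$ at position $p$ and zero elsewhere. The piece $\ub=1\cdot\ub$ is already of the required form because $1\in\II_n(\uE)$. For each $p\in P$, the fact that $\gamma_{n,p}\in\II_{n,\SQ_p}(E_p)$ gives $\gamma_{n,p}\in p^{-m_p}\SZ_p$ for some $m_p\geq 1$ (the degenerate case $\#E_p\leq n$, where $\II_{n,\SQ_p}(E_p)=\SQ_p$, is handled similarly), and Proposition~\ref{th:localizationp} yields $p^{-m_p}\in\II_n(E_p)$. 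Applying the Chinese Remainder Theorem of Remark~\ref{rem:C} together with a Bezout identity in $\SZ$ (to clear the $p$-adic unit arising from the approximation), one produces a polynomial in $\int_\SQ(\uE,\hZ)$ of degree $n$ whose leading coefficient is exactly $p^{-m_p}$; thus $p^{-m_p}\in\II_n(\uE)$. The identity $\uga_n^{(p)}=p^{-m_p}\cdot(p^{m_p}\uga_n^{(p)})$ with $p^{m_p}\uga_n^{(p)}\in\hZ$ then exhibits $\uga_n^{(p)}$ in the required form, completing the proof of the key lemma.

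Both remaining assertions of the theorem follow readily. The $\hZ$-linear map $\II_n(\uE)\otimes_\SZ\hZ\to\Aa_f(\SQ)$ is injective by flatness of $\hZ$ and surjects onto $\II_{n,\Aa_f}(\uE)$ by the key lemma, giving the isomorphism $\II_{n,\Aa_f}(\uE)\cong\II_n(\uE)\otimes_\SZ\hZ$. Finally, if $\{f_n\}_{n\geq 0}$ is a regular $\SZ$-basis of $\int_\SQ(\uE,\hZ)=\bigoplus_{n\geq 0}\SZ\cdot f_n$, tensoring with $\hZ$ produces the free $\hZ$-module $\bigoplus_{n\geq 0}\hZ\cdot(f_n\otimes 1)$, which $\Phi$ identifies with $\int_{\Aa_f(\SQ)}(\uE,\hZ)=\bigoplus_{n\geq 0}\hZ\cdot f_n$. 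Since the degrees $\deg f_n=n$ are preserved, $\{f_n\}_{n\geq 0}$ is a regular $\hZ$-basis.
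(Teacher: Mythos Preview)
Your argument is correct. The injectivity via flatness and the well-definedness of $\Phi$ match the paper verbatim, but your surjectivity proof takes a different route. The paper does not induct on the degree: instead it decomposes $\ug=(g_p)_p$ all at once by separating the finite set $P_0$ of primes where $g_p\notin\SZ_p[x]$, writing the ``good'' part as $\sum_n\underline{\delta}_n x^n$ with $\underline{\delta}_n\in\hZ$, and for each bad prime $p_i$ invoking directly the completion isomorphism $\int_{\SQ_{p_i}}(E_{p_i},\SZ_{p_i})\cong\int_\SQ(\uE,\hZ)\otimes_\SZ\SZ_{p_i}$ (from (\ref{eq:17}) and (\ref{eq:completion})) to express $g_{p_i}$ as a $\SZ_{p_i}$-combination of polynomials $h_j\in\int_\SQ(\uE,\hZ)$; idempotents $\underline{\nu}_i\in\hZ$ supported at $p_i$ then glue these local expressions. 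Your approach replaces this single structural step by a degree induction that isolates the leading coefficient and proves, via Remark~\ref{rem:C} plus a B\'ezout identity, that each $p^{-m_p}$ actually lies in $\II_n(\uE)$. This is slightly more hands-on (you rebuild the polynomial with prescribed leading coefficient rather than citing the tensor isomorphism), but it has the pleasant feature that the second assertion $\II_{n,\Aa_f}(\uE)\cong\II_n(\uE)\otimes_\SZ\hZ$ is exactly your key lemma, whereas in the paper it is only stated as a consequence. Both arguments ultimately rest on the same Chinese-remainder mechanism; the paper's is shorter because it treats all coefficients of $g_p$ simultaneously rather than one degree at a time.
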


\begin{proof}
Let $\eta:\SQ\otimes_{\SZ}\hZ\cong \Aa_f(\SQ)$ be the canonical isomorphism of $\hZ$-modules characterized by $\eta(r\otimes \underline{1})=j(r)=r\underline{1}$ where $j$ denotes the embedding $\SQ\to \Aa_f(\SQ)$ defined by (\ref{eq:j}). Then, $\eta$ induces another canonical isomorphism of $\hZ$-modules $\widetilde{\eta}:\SQ[x]\otimes_{\SZ}\hZ\cong \Aa_f(\SQ)[x],$ itself characterized by:
 \be\label{eq:etatilde} \forall \, g(x)=\sum_{n=0}^dc_nx^n\in\SQ[x]\;,\quad\widetilde{\eta}(g(x)\otimes \underline{1})=\sum_{n=0}^d j(c_n)\,x^n = \underline{1} \, g(x)\,.\ee
With these identifications, we have the following containment $\int_{\SQ}(\uE,\hZ)\otimes_{\SZ}\hZ\subseteq \Aa_f(\SQ)[x]$ since $\hZ$ is a faithfully flat $\SZ$-module 
(see for instance \cite[Chap. I, \S. 3, 1. Exemples]{bib:Bourb}). In fact, by formulas (\ref{eq:deff}) and (\ref{eq:etatilde}), it is clear that the evaluation of any polynomial in $\int_{\SQ}(\uE,\hZ)\otimes_{\SZ}\hZ$ at any element of $\uE$ is in $\hZ$, so it follows that $\int_{\SQ}(\uE,\hZ)\otimes_{\SZ}\hZ$ may be considered a subring of $\int_{\Aa_f(\SQ)}(\uE,\hZ)$. Conversely, let 
$$\ug(x)=(g_p(x))=\sum_{n=0}^d\gamma_{n,p}x^n\,$$ 
be any element of $\int_{\Aa_f(\SQ)}(\uE,\hZ)$. We have to show that $\ug(x)$ can be written as a finite linear combination of elements of $\int_{\SQ}(\uE,\hZ)$ with coefficients in $\hZ$. There is a finite set $P_0=\{p_i\mid i\in I\}$ of primes such that $g_p\in\SZ_p[x]$ for $p\notin P_0,$ and we know that, for $p\in P_0$, $g_p\in\int_{\SQ_p}(E_p,\SZ_p)$. For $0\leq n\leq d$, we define $\underline{\delta}_{n}\in \hZ$ by $\delta_{n,p}=\gamma_{n,p}$ for $p\notin P_0$ and $\delta_{n,p}=0$ for $p\in P_0$. Since, for each $p$, $\int_{\SQ_p}(E_p,\SZ_p)\cong\int_{\SQ}(\uE,\hZ)\otimes_{\SZ}\SZ_p$ (by (\ref{eq:17}) and (\ref{eq:completion})), there exists a finite set of polynomials $\{h_j\mid j\in J\}\subset \int_{\SQ}(\uE,\hZ)$ such that, for $p\in P_0,$ we may write $g_p(x)=\sum_{j\in J}\varepsilon_{j,p}h_j(x)$ where $\varepsilon_{j,p}\in \SZ_p$. Now, for $0\leq i\leq s$, let $\underline{\nu}_i=(\nu_{i,p})\in\hZ$ with $\nu_{i,p_i}=1$ and $\nu_{i,p}=0$ for $p\not= p_i$. Finally, we have:
$$\ug(x)=\sum_{n=0}^d\underline{\delta}_nx^n +\sum_{j\in J}\left(\sum_{i\in I} \underline{\nu}_i\varepsilon_{j,p_i}\right)h_j(x)\,,$$
which shows that $\ug(x)$ is a finite linear combination of elements of $\int_{\SQ}(\uE,\hZ)$ with coefficients in $\hZ$.
\end{proof}

In this adelic framework, Lemma~\ref{th:lemma} becomes:

\begin{lemma}\label{th:lemmabis}
A sequence $\{f_n\}_{n\geq 0}$ of elements of the $\hZ$-module $\int_{\Aa_f(\SQ)}(\uE,\hZ)$ is a regular basis if and only if, for each $n$, the leading coefficient of $f_n$ generates the $\hZ$-module $\II_{n,\Aa_f}(\uE)$. In particular, $\int_{\Aa_f(\SQ)}(\uE,\hZ)$ admits a regular basis if and only if the $\II_{n,\Aa_f}(\uE)$'s are cyclic $\hZ$-modules. 
\end{lemma}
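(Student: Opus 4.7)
My plan is to adapt the proof of Lemma~\ref{th:lemma} (essentially \cite[Proposition II.1.4]{bib:CC}) to the $\hZ$-module setting. The main \emph{iff} splits into the two implications, plus the easy deduction of the ``in particular'' statement. The principal new difficulty is that $\hZ$ has zero-divisors, so the uniqueness step---automatic when the coefficient ring is a domain---requires showing that $\lc(f_n)$ is a non-zero-divisor in $\Aa_f(\SQ)$.

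For the forward direction, I assume $\{f_n\}_{n\geq 0}$ is a regular basis, so $\deg(f_n)=n$. Trivially $\lc(f_n)\in\II_{n,\Aa_f}(\uE)$. Conversely, given $\underline{\ell}\in\II_{n,\Aa_f}(\uE)$, I choose $\ug\in\int_{\Aa_f(\SQ)}(\uE,\hZ)$ of degree $\leq n$ with $\lc(\ug)=\underline{\ell}$ and expand $\ug=\sum_{k=0}^N\underline{c}_k f_k$ in the basis. The key sub-claim is that $\underline{c}_k=0$ for $k>n$; comparison of the $x^n$-coefficient then yields $\underline{\ell}=\underline{c}_n\lc(f_n)$. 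I will prove the sub-claim by downward induction on the maximal index $N$ with $\underline{c}_N\neq 0$: if $N>n$, then comparison of $x^N$-coefficients forces $\underline{c}_N\lc(f_N)=0$, so $\underline{c}_N f_N$ has degree strictly less than $N$; applying the inductive hypothesis to $\underline{c}_N f_N$ produces a basis expansion with indices $<N$, and $\hZ$-linear independence of $\{f_k\}_{k\geq 0}$ then forces $\underline{c}_N=0$, a contradiction.

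For the reverse direction, assume $\II_{n,\Aa_f}(\uE)=\lc(f_n)\hZ$ for every $n$. I first observe that $\lc(f_n)$ is a non-zero-divisor in $\Aa_f(\SQ)$: the projection $\pi_p(\II_{n,\Aa_f}(\uE))=\II_{n,\SQ_p}(E_p)$ is a nonzero $\SZ_p$-submodule of $\SQ_p$ (it contains $1$, realized as the leading coefficient of $x^n\in\int_{\SQ_p}(E_p,\SZ_p)$) and is generated by $\lc(f_n)_p$; hence $\lc(f_n)_p\neq 0$ in $\SQ_p$ for every prime $p$, which componentwise in $\Aa_f(\SQ)=\prod_p\SQ_p$ gives the non-zero-divisor property. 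Existence of a basis expansion now follows by induction on degree: given $\ug$ of degree $d$, cyclicity yields $\lc(\ug)=\underline{c}_d\lc(f_d)$ for some $\underline{c}_d\in\hZ$, and $\ug-\underline{c}_d f_d$ has degree $<d$. Uniqueness is immediate: a relation $\sum_{k\leq N}\underline{c}_k f_k=0$ with $\underline{c}_N\neq 0$ would give $\underline{c}_N\lc(f_N)=0$ from the $x^N$-coefficient, contradicting the non-zero-divisor property.

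The ``in particular'' statement is then immediate: given cyclicity of each $\II_{n,\Aa_f}(\uE)$, pick any generator and realize it as $\lc(f_n)$ of some $f_n\in\int_{\Aa_f(\SQ)}(\uE,\hZ)$ of degree $n$ (possible by the very definition of the characteristic module), and apply the main equivalence. The main obstacle is the zero-divisor subtlety in the forward-direction induction; everything else is bookkeeping analogous to the classical case.
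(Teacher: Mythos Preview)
The paper states this lemma without proof, presenting it simply as the adelic analogue of Lemma~\ref{th:lemma}; so you are already supplying more than the paper does. Your reverse direction and the ``in particular'' deduction are fine (modulo the harmless implicit assumption $\deg f_n=n$, which matches the phrasing of Lemma~\ref{th:lemma}).

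Your forward-direction induction, however, does not close. You want to show that if $\ug$ has degree $\le n$ and basis expansion $\sum_{k\le N}\underline c_k f_k$ with $\underline c_N\neq 0$, then $N\le n$. Your step is: from $\underline c_N\,\lc(f_N)=0$ deduce $\deg(\underline c_N f_N)<N$, then ``apply the inductive hypothesis to $\underline c_N f_N$'' to place it in the span of $f_0,\dots,f_{N-1}$. But the statement you need here is exactly ``every element of degree $<N$ has expansion with indices $<N$'', which is the assertion at level $N$ itself. No previously established instance covers $\underline c_N f_N$: its degree is only known to be $<N$, not $<N-1$, and its unique basis expansion already uses index $N$. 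The argument is circular.

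The clean fix is to establish the non-zero-divisor property of $\lc(f_n)$ in the forward direction as well, before attempting any induction. From the assumption that $\{f_n\}$ is a $\hZ$-basis, project componentwise: the map $\ug\mapsto g_p$ is a surjection $\int_{\Aa_f(\SQ)}(\uE,\hZ)\twoheadrightarrow\int(E_p,\SZ_p)$, and using the idempotent $\underline e^{(p)}\in\hZ$ one checks that $\{f_{n,p}\}_{n\ge 0}$ is a $\SZ_p$-basis of $\int(E_p,\SZ_p)$. Since $\SZ_p$ is a domain, these are $\SQ_p$-linearly independent; as $f_{0,p},\dots,f_{N,p}$ are $N{+}1$ independent polynomials of degree $\le N$, a dimension count in $\SQ_p[x]$ forces $\deg f_{n,p}=n$ for every $n$. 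Thus $\lc(f_n)_p\neq 0$ for all $p$, i.e.\ $\lc(f_n)$ is a non-zero-divisor. With this in hand the forward direction is immediate: the $x^N$-coefficient of $\ug$ equals $\underline c_N\,\lc(f_N)\neq 0$, so $N=\deg\ug\le n$, and then $\lc(\ug)=\underline c_n\,\lc(f_n)$.
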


\begin{proposition}\label{th:cyclic characteristic ideals}
The $\hZ$-module $\II_{n,\Aa_f}(\uE)$ is cyclic if and only if the $\SZ$-module $\II_n(\uE)$ is cyclic.
\end{proposition}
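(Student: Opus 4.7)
The plan is to invoke Theorem~\ref{th:52}, which identifies $\II_{n,\Aa_f}(\uE)$ with $\II_n(\uE)\otimes_\SZ\hZ$ as a $\hZ$-submodule of $\Aa_f(\SQ)$, and to reduce both cyclicity statements to the characterization of $\II_n(\uE)$ being a fractional ideal provided by Corollary~\ref{characteristic ideal fractional}.

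One implication will be immediate: if $\II_n(\uE)=c\,\SZ$ for some $c\in\SQ$, then tensoring with $\hZ$ yields $\II_{n,\Aa_f}(\uE)=j(c)\,\hZ$, which is $\hZ$-cyclic.

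For the converse, I would start from the assumption $\II_{n,\Aa_f}(\uE)=\underline{c}\,\hZ$ with $\underline{c}=(c_p)_{p\in\SP}\in\Aa_f(\SQ)$, and apply the projection $\pi_p$ to each side. Using the identity $\pi_p(\II_{n,\Aa_f}(\uE))=\II_{n,\SQ_p}(E_p)$ recorded just before Theorem~\ref{th:52}, one obtains, for every prime $p$,
$$\II_{n,\SQ_p}(E_p)=\pi_p(\underline{c}\,\hZ)=c_p\,\SZ_p\,.$$
Since the right-hand side is a principal fractional ideal of $\SZ_p$, this forces $\#E_p>n$ for every $p$, and writing $\II_{n,\SQ_p}(E_p)=p^{-n_p}\SZ_p$ with $n_p\geq 0$ gives $v_p(c_p)=-n_p$. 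The defining property of $\underline{c}\in\Aa_f(\SQ)$ then says that $v_p(c_p)\geq 0$, that is $n_p=0$, for all but finitely many primes $p$.

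To conclude, one reads off from the argument inside the proof of Corollary~\ref{characteristic ideal fractional} (which shows that $\II_{n,\SQ_p}(E_p)=\SZ_p$ if and only if $\#(E_p\!\!\pmod{p})>n$) that both conditions of that corollary hold, whence $\II_n(\uE)$ is a fractional ideal of $\SZ$, and thus cyclic because $\SZ$ is a principal ideal domain. The main obstacle is exactly this converse direction: the task is to translate the $\hZ$-cyclicity into the two local conditions on the $E_p$'s of Corollary~\ref{characteristic ideal fractional}, and the crucial ``almost every $p$'' clause is extracted precisely from the restricted-product defining property of $\underline{c}\in\Aa_f(\SQ)$.
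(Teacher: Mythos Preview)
Your proof is correct, and the forward implication matches the paper exactly. For the converse, however, the paper takes a considerably shorter route: from $\II_{n,\Aa_f}(\uE)=\ua\,\hZ$ with $\ua\in\Aa_f(\SQ)=\hZ\otimes_\SZ\SQ$, one simply picks $d\in\SZ\setminus\{0\}$ with $d\ua\in\hZ$ and concludes $d\,\II_n(\uE)\subseteq d\ua\,\hZ\cap\SQ\subseteq\hZ\cap\SQ=\SZ$, so $\II_n(\uE)\subseteq\frac{1}{d}\SZ$ is a fractional ideal of the PID $\SZ$, hence cyclic. Your argument reaches the same conclusion by projecting onto each $\SZ_p$ and verifying the two local hypotheses of Corollary~\ref{characteristic ideal fractional}; this is more elaborate but has the merit of making transparent exactly which condition on the $E_p$'s is forced (and how the ``almost all $p$'' clause comes from the restricted-product definition of $\Aa_f(\SQ)$). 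Both proofs ultimately exploit the same fact---that a generator in $\Aa_f(\SQ)$ has components in $\SZ_p$ for all but finitely many $p$---but the paper's version bypasses the local analysis entirely.
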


\begin{proof}
It is clear that, if $\II_n(\uE)$ is cyclic, then $\II_{n,\Aa_f}(\uE)=\II_n(\uE)\otimes_{\SZ}\hZ$ is also cyclic. Conversely, if $\II_{n,\Aa_f}(\uE)=\ua\hZ$ where $\ua\in\Aa_f(\SQ)=\hZ\otimes_{\SZ}\SQ$, then there exists $d\in\SZ\setminus\{0\}$ such that $d\ua\in\hZ$, and hence, $d\II_n(\uE)\subseteq\SZ$.
\end{proof}

It follows from Lemma \ref{th:lemma}, Lemma \ref{th:lemmabis} and Proposition \ref{th:cyclic characteristic ideals} that if the $\hZ$-module $\int_{\Aa_f(\SQ)}(\uE,\hZ)$ has a regular basis then the $\SZ$-module $\int_{\SQ}(\uE,\hZ)$ has a regular basis.

Recall that, following Corollary \ref{th:mod}, all the characteristic modules are cyclic if and only if
\begin{enumerate}
\item $\# E_p$ is infinite for all $p\in\SP$,
\item for each $n\in\SN$, $\#(\,E_p \pmod{p}) > n$ for all but finitely many $p\in\SP$.
\end{enumerate}  

Assuming that these conditions are satisfied, we show now how we can construct regular bases of the $\hZ$-module $\int_{\Aa_f(\SQ)}(\uE,\hZ)$. We first extend Bhargava's notion of $p$-ordering.

\begin{definition}\label{def:adel}
An {\em adelic ordering} of $\uE$ is a sequence $\{\ua_n\}_{n\geq 0}$ of elements of $\uE$ such that both following conditions hold:
\begin{enumerate}
\item[(a)] $\forall p\in\SP,\;\forall n\geq 1,\; v_p\left(\pi_p\left(\prod_{k=0}^{n-1}(\ua_n-\ua_k)\right)\right) = \min_{\uy\in\uE}\left\{v_p\left(\pi_p\left(\prod_{k=0}^{n-1}(\uy-\ua_k)\right)\right)\right\} $
\item[(b)] $\forall n\geq 1,\;\;v_p\left(\pi_p\left(\prod_{k=0}^{n-1}(\ua_n-\ua_k)\right)\right)=0$ for almost $p\in\SP$.
\end{enumerate}
\end{definition}

The first condition means that, for each $p\in\SP$, the sequence $\{\alpha_{n,p}\}$ is a $p$-ordering of $E_p$, while the second condition means that the product $\prod_{k=0}^{n-1}(\ua_n-\ua_k)$ is invertible in $\Aa_f(\SQ)$. Since $v_p(\pi_p(\prod_{k=0}^{n-1}(\ua_n-\ua_k)))=v_p(\prod_{k=0}^{n-1}(\alpha_{n,p}-\alpha_{k,p}))$ which is the value of the $p$-sequence of $E_p$ at $n$, condition b) above is easily seen to be equivalent to condition (2) of Corollary \ref{characteristic ideal fractional}.

\begin{proposition}\label{th:56}
Let $\{\ua_n\}_{n\geq 0}$ be a sequence of elements of $\uE$ such that, for each $p$, the $\alpha_{n,p}$'s are distinct. Consider the associated polynomials:
$$\ug_0=1\textrm{ and, for } n\geq 1,\;\;\ug_n(x)=\prod_{k=0}^{n-1}\frac{x-\ua_k}{\ua_n-\ua_k}\,.$$
The following assertions are equivalent:
\begin{enumerate}
\item The sequence $\{\ua_n\}_{n\geq 0}$ is an adelic ordering of $\uE$.
\item The polynomials $\ug_n$ belong to $\int_{\Aa_f(\SQ)}(\uE,\hZ).$
\item The polynomials $\ug_n$ form a regular basis of the $\hZ$-module $\int_{\Aa_f(\SQ)}(\uE,\hZ).$ 
\item For every polynomial $\ug\in\Aa_f(\SQ)[x]$ of degree $n$, 
$$\ug\in\int_{\Aa_f(\SQ)}(\uE,\hZ)\;\Leftrightarrow\; \ug(\ua_k)\in\hZ\textrm{ for } 0\leq k\leq n\,.$$
\end{enumerate}
\end{proposition}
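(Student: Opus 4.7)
The plan is to prove the cyclic chain $(1) \Rightarrow (2) \Rightarrow (3) \Rightarrow (4) \Rightarrow (1)$, with Bhargava's local theorem on $p$-orderings (recalled just before Proposition~\ref{th:56}) applied at each prime and glued together by the structural results of Section~5.

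For $(1) \Rightarrow (2)$ I unpack both parts of the adelic ordering: condition (b) says exactly that $\prod_{k=0}^{n-1}(\ua_n-\ua_k)$ is a unit of $\Aa_f(\SQ)$, which places the leading coefficient of $\ug_n$ in $\Aa_f(\SQ)$ and hence $\ug_n$ in $\Aa_f(\SQ)[x]$; condition (a) says each component sequence $\{\alpha_{n,p}\}_n$ is a Bhargava $p$-ordering of $E_p$, so each $g_{n,p}(x)=\prod_{k=0}^{n-1}(x-\alpha_{k,p})/(\alpha_{n,p}-\alpha_{k,p})$ lies in $\int(E_p,\SZ_p)$ and hence $\ug_n(\uE)\subseteq\hZ$. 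For $(2) \Rightarrow (3)$, by Lemma~\ref{th:lemmabis} it suffices to check that the leading coefficient $\underline{\ell}_n:=1/\prod_{k=0}^{n-1}(\ua_n-\ua_k)$ of $\ug_n$ generates the $\hZ$-module $\II_{n,\Aa_f}(\uE)$: Bhargava's theorem at each $p$ gives $\pi_p(\underline{\ell}_n)\SZ_p=\II_{n,\SQ_p}(E_p)=\pi_p(\II_{n,\Aa_f}(\uE))$, so any $\uc\in\II_{n,\Aa_f}(\uE)$ satisfies $\pi_p(\uc)\in\pi_p(\underline{\ell}_n)\SZ_p$ at every $p$, whence $\uc\in\underline{\ell}_n\hZ$, giving $\II_{n,\Aa_f}(\uE)=\underline{\ell}_n\hZ$.

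The implication $(3) \Rightarrow (4)$ is the standard triangular argument: any $\ug\in\Aa_f(\SQ)[x]$ of degree $\le n$ expands uniquely as $\ug=\sum_{k=0}^n\uc_k\ug_k$ with $\uc_k\in\Aa_f(\SQ)$, and since $\ug_j(\ua_k)=0$ for $k<j$ and $\ug_j(\ua_j)=1$, each $\uc_k$ is a $\SZ$-combination of $\ug(\ua_0),\ldots,\ug(\ua_k)$; so $\ug(\ua_k)\in\hZ$ for all $k\le n$ forces $\uc_k\in\hZ$ for all $k$, hence $\ug\in\int_{\Aa_f(\SQ)}(\uE,\hZ)$.

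The main obstacle is $(4) \Rightarrow (1)$, which I split in two. For condition (a), suppose it fails at some $p$ and some $n$, so there is $y\in E_p$ with $v:=v_p(\prod_{k=0}^{n-1}(y-\alpha_{k,p}))<w:=v_p(\prod_{k=0}^{n-1}(\alpha_{n,p}-\alpha_{k,p}))$. Testing (4) on the adele polynomial $\ug(x)=p^{-w}\prod_{k=0}^{n-1}(x-\ua_k)\in\Aa_f(\SQ)[x]$, one checks directly that $\ug(\ua_k)\in\hZ$ for all $k\le n$ (zero for $k<n$; $p$-valuation $-w+w=0$ for $k=n$ and non-negative at primes $q\ne p$), yet for $\ub\in\uE$ with $\beta_p=y$ the $p$-component of $\ug(\ub)$ has valuation $-w+v<0$, contradicting (4). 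Condition (b) then follows from the standing hypothesis preceding the proposition (the second item of Corollary~\ref{th:mod}): for almost all $p$ one has $\#(E_p\pmod p)>n$, so one can pick $y\in E_p$ with residue mod $p$ outside $\{\alpha_{0,p},\ldots,\alpha_{n-1,p}\}\pmod p$, giving $\min_y v_p(\prod_{k=0}^{n-1}(y-\alpha_{k,p}))=0$; combined with (a) this forces $v_p(\prod_{k=0}^{n-1}(\ua_n-\ua_k))=0$ for almost all $p$, which is (b). This last step is where the ambient hypothesis used to construct regular bases genuinely enters the argument.
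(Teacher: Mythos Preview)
Your argument is correct (with one cosmetic slip: in $(3)\Rightarrow(4)$ the $\uc_k$'s are $\hZ$-combinations, not $\SZ$-combinations, of the values $\ug(\ua_0),\ldots,\ug(\ua_k)$, since the coefficients $\ug_h(\ua_k)$ lie in $\hZ$, not in $\SZ$; the conclusion is unaffected). Your route, however, differs from the paper's. The paper does not argue a cyclic chain but proves the pairwise equivalences $(1)\Leftrightarrow(2)$, $(2)\Leftrightarrow(3)$, $(3)\Leftrightarrow(4)$ directly. In particular, its $(2)\Rightarrow(3)$ is done via the triangular recursion $\uc_k=\ug(\ua_k)-\sum_{h<k}\uc_h\ug_h(\ua_k)$ rather than through Lemma~\ref{th:lemmabis} and the componentwise description $\pi_p(\II_{n,\Aa_f}(\uE))=\II_{n,\SQ_p}(E_p)$ that you use; and its ``$(3)\Leftrightarrow(4)$ follows easily from the previous proof of $(2)\to(3)$'' is left as a one-line remark. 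What your approach buys is an explicit treatment of the hardest step, $(4)\Rightarrow(1)$: your test polynomial $\ug(x)=p^{-w}\prod_{k<n}(x-\ua_k)$ cleanly forces condition~(a), and you correctly isolate that condition~(b) is \emph{not} a consequence of (4) alone but requires the ambient hypothesis $\#(E_p\bmod p)>n$ for almost all $p$. Indeed, without that hypothesis the equivalence fails (take $\uE=\prod_p p\SZ_p$ and $\ua_n=(np)_p$: here each $\{np\}_n$ is a $p$-ordering so (a) and (4) hold componentwise, yet $\ug_1$ has leading coefficient $(1/p)_p\notin\Aa_f(\SQ)$, so (2) fails). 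The paper's approach is shorter but leaves this dependence on the standing hypothesis implicit; yours makes it transparent.
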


\begin{proof}
Let us prove first that (1)$\leftrightarrow$(2): the coefficients of the polynomials $\ug_n$ belong a priori to $\prod_{p\in\SP}\SQ_p$, to say that there are in $\Aa_f(\SQ)$ is equivalent to the second condition of Definition~\ref{def:adel}. Moreover, $\ug_n\in\int_{\Aa_f(\SQ)}(\uE,\hZ)$ is equivalent to: 
\begin{center}
for every $p\in\SP$, $g_{n,p}(x)=\prod_{k=0}^{n-1}\frac{x-\alpha_{k,p}}{\alpha_{n,p}-\alpha_{k,p}}\in\int_{\SQ_p}(E_p,\SZ_p)$,
\end{center}
which is known to be equivalent to the fact that the sequence $\{\alpha_{n,p}\}_{n\geq 0}$ is a $p$-ordering of $E_p$ (cf. \cite{bib:Bharg1}), which is the first condition of Definition~\ref{def:adel}.

Clearly, (3)$\rightarrow$(2). Conversely, assume that the $\ug_k$'s are in $\int_{\Aa_f(\SQ)}(\uE,\hZ)$. Because of the degrees, every $\ug\in\int_{\Aa_f(\SQ)}(\uE,\hZ)$ of degree $\leq n$ may be written $\ug(x)=\sum_{k=0}^n\uc_k\ug_k(x)$ with $\uc_k\in\Aa_f(\SQ)$. Since, for every $k$, $\ug_k(\ua_h)=\underline{0}$ for $0\leq h\leq k-1$ and $\ug_k(\ua_k)=\underline{1}$, it is easy to see by induction on $k$ that $\ug(\ua_h)\in\hZ$ for $0\leq h\leq k$ implies that the coefficients $\uc_0,\ldots,\uc_k$ belong to $\hZ$. Indeed, the coefficients $\uc_k$ may be computed recursively by:
\be\label{eq:46} \uc_k=\ug(\ua_k)-\sum_{h=0}^{k-1}\uc_h\ug_h(\ua_k)\,.\ee
Consequently, the $\ug_k$'s form a $\SZ$-basis, so that (2)$\rightarrow$(3).

Finally that (3)$\leftrightarrow$(4) follows easily from the previous proof of (2)$\rightarrow$(3).
\end{proof}

\begin{example}
The sequence $\{\underline{n}\}_{n\geq 0}$ (where $\underline{n}=(n_p)_{p\in\SP}$ and $n_p=n$ for every $p$) is an adelic ordering of $\hZ$. Thus, the polynomials $\binom{x}{n}$ form a regular basis of $\int_{\Aa_f(\SQ)}(\hZ):$ every $\ug\in\int_{\Aa_f(\SQ)}(\hZ)$ of degree $n$ may be uniquely written as $\sum_{k=0}^n\uc_k\binom{x}{k}$ where the $\uc_k$'s are in $\hZ$ and satisfy:
$$\uc_k=g(\underline{k})-\sum_{h=0}^{k-1}\uc_h\binom{k}{h}\in\hZ\,.$$
\end{example}
\section{Extension of Bhargava-Kedlaya's theorem}

Recall that, in $\hZ$, a sequence $\{\ua_n\}_{n\geq 0}=\{(\alpha_{n,p})_{p\in\SP}\}_{n\geq 0}$ converges to $\ua=(\alpha_p)_{p\in\SP}$  if and only if, for each $p\in\SP$, the sequence $\{\alpha_{n,p}\}_{n\geq 0}$ converges to $\alpha_p$ in $\SZ_p$.  
And, in $\Cc(\uE,\hZ)$, a sequence $\{\ufi_n\}_{n\geq 0}=\{(\varphi_{n,p})_{p\in\SP}\}_{n\geq 0}\in \prod_{p\in\SP}\Cc(E_p,\SZ_p)$ converges uniformly to $\ufi=(\varphi_p)_{p\in\SP}$ if and only if, for each $p\in\SP$, $\{\varphi_{n,p}\}_{n\geq 0}$ converges uniformly to $\varphi_p$ in $\Cc(E_p,\SZ_p)$.

It follows from the previous section that Mahler's result (Theorem~\ref{th:mah}) extends in the following way:

\begin{proposition}\label{th:mal2}
Every $\ufi\in\prod_{p\in\SP}\Cc(\SZ_p,\SZ_p)$ may be uniquely written as a series in the $\binom{x}{n}$'s with coefficients in $\hZ:$
$$\ufi(x)=\sum_{n=0}^\infty \uc_n\binom{x}{n}, \textrm{ where } \uc_n\in\hZ,\; \lim_{n\to+\infty}\uc_n=\underline{0}\,.$$
Moreover,
$$\uc_k=\ufi(\underline{k})-\sum_{h=0}^{k-1}\uc_h\,\binom{k}{h}\in\hZ\,.$$
\end{proposition}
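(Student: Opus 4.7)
The plan is to reduce to the componentwise Mahler theorem (Theorem~\ref{th:mah}) and then use the adelic ordering machinery of Section~5 to repackage the expansion. Write $\ufi=(\varphi_p)_{p\in\SP}$. For each prime $p$, Mahler's theorem yields a unique expansion $\varphi_p(x)=\sum_{n\geq 0}c_{n,p}\binom{x}{n}$ with $c_{n,p}\in\SZ_p$ (since $\varphi_p$ is $\SZ_p$-valued, as noted right after Theorem~\ref{th:mah}) and $v_p(c_{n,p})\to+\infty$. I would then set $\uc_n:=(c_{n,p})_{p\in\SP}\in\prod_p\SZ_p=\hZ$.

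Next I would verify simultaneously that $\uc_n\to\underline{0}$ in $\hZ$ and that the series $\sum_n\uc_n\binom{x}{n}$ converges uniformly to $\ufi$ on $\hZ$. Both amount to the same bookkeeping: a basic neighborhood of $\underline{0}$ in $\hZ$ has the form $\uO=\prod_p O_p$ with $O_p=\SZ_p$ for all but finitely many $p$ and $p^{k_p}\SZ_p\subseteq O_p$ for the finitely many exceptional ones. Mahler at each exceptional prime provides $N_p$ with $c_{n,p}\in p^{k_p}\SZ_p$ for $n\geq N_p$; taking $N=\max N_p$ gives $\uc_n\in\uO$ for $n\geq N$. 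Since $\binom{\alpha_p}{n}\in\SZ_p$ for every $\alpha_p\in\SZ_p$, the $p$-th component of the tail $\sum_{n\geq N}\uc_n\binom{\ux}{n}$ is automatically in $O_p$, uniformly in $\ux\in\hZ$, which is exactly the required uniform convergence in $\Cc(\hZ,\hZ)$.

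For uniqueness together with the explicit recursion, I would invoke the adelic ordering viewpoint. By the example preceding Section~6, the sequence $\{\underline{n}\}_{n\geq 0}$ is an adelic ordering of $\hZ$, so $\{\binom{x}{n}\}_{n\geq 0}$ is a regular basis of the $\hZ$-module $\int_{\Aa_f(\SQ)}(\hZ)$. Applying formula~(\ref{eq:46}) of Proposition~\ref{th:56} to every partial sum and then passing to the limit (evaluation at a fixed point of $\hZ$ being continuous on $\Cc(\hZ,\hZ)$) yields
$$\uc_k=\ufi(\underline{k})-\sum_{h=0}^{k-1}\uc_h\binom{k}{h},$$
which determines the $\uc_k$ uniquely by induction on $k$, proving at once uniqueness of the expansion and the closed-form recursion.

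I expect the only nontrivial aspect to be the glueing step: packaging the prime-by-prime Mahler estimates into a single uniform estimate on the adelic compact $\hZ$. This is ultimately harmless because the product topology on $\hZ$ restricts basic neighborhoods of $\underline{0}$ to nontrivial conditions at finitely many primes, reducing a potential supremum over infinitely many $p$ to a finite maximum.
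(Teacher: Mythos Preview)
Your proposal is correct and follows essentially the same route as the paper: the paper simply states that the proposition ``follows from the previous section'' (i.e., the adelic ordering machinery of Section~5 applied to the ordering $\{\underline{n}\}_{n\geq 0}$) together with Mahler's theorem componentwise, and the general Theorem~\ref{th:BK1} is proved by exactly the reduction you describe. Your argument is more explicit about the glueing of the componentwise estimates and about deriving the recursion by continuity of evaluation, but the strategy is identical.
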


More generally and more precisely, we extend Bhargava-Kedlaya's result (Theorem~\ref{th:BK}):

\begin{theorem}\label{th:BK1}
Assume that $\uE=\prod_{p\in\SP}E_p$ is a compact subset of $\hZ$ such that, for each $p\in\SP$, $E_p$ is infinite and, for each $n\geq 0$, $\#(E\pmod{p})>n$ for almost all $p\in\SP$. Then:
\begin{itemize}
\item[1)] There exists a sequence $\{\ua_n\}_{n\geq 0}$ which is an adelic ordering of $\uE$, and the corresponding polynomials 
$$\ug_n(x)=\prod_{k=0}^{n-1}\frac{x-\ua_k}{\ua_n-\ua_k}$$
form a basis of $\hZ$-module $\int_{\Aa_f(\SQ)}(\uE,\hZ)$.

\item[2)] Every $\ufi\in\prod_{p\in\SP}\Cc(E_p,\SZ_p)$ may be uniquely written as a uniformly convergent series in the $\ug_n$'s with coefficients in $\hZ:$
$$\ufi(x)=\sum_{n\geq 0}\uc_n\,\ug_n(x)Ê\textrm{ where } \uc_n\in\hZ \textrm { and } \lim_{n\to+\infty}\uc_n=\underline{0}\,.$$

\item[3)] The $\uc_n$'s satisfy the recursive formula:
\be\label{eq:52} \uc_n=\ufi(\ua_n)-\sum_{k=0}^{n-1}\uc_k\,\ug_k(\ua_n)\,,\ee
and one has:
$$\forall p\in\SP,\quad \inf_{n\geq 0}v_p(\pi_p(\uc_n)) =\inf_{\uy\in \uE}v_p(\pi_p(\ufi(\uy))\,.$$
\end{itemize}
\end{theorem}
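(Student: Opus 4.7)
The plan is to reduce everything to the local Bhargava--Kedlaya theorem (Theorem~\ref{th:BK}) applied component-wise, and then glue the local data together via the restricted-product structure of $\hZ$.

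For part 1), I would first pick, for each $p\in\SP$, a $p$-ordering $\{a_{n,p}\}_{n\geq 0}$ of $E_p$ (possible since $E_p$ is infinite), and set $\ua_n:=(a_{n,p})_{p\in\SP}\in\uE$. Condition (a) of Definition~\ref{def:adel} then holds by construction. For condition (b), an easy induction shows that whenever $\#(E_p\pmod{p})>n$ the first $n+1$ entries of any $p$-ordering lie in distinct residue classes mod $p$, forcing $v_p(\prod_{k=0}^{n-1}(a_{n,p}-a_{k,p}))=0$; by hypothesis this holds for almost every $p$, so condition (b) is satisfied. Proposition~\ref{th:56} then yields that $\{\ug_n\}_{n\geq 0}$ is a regular basis of the $\hZ$-module $\int_{\Aa_f(\SQ)}(\uE,\hZ)$.

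For parts 2) and 3), given $\ufi=(\varphi_p)_{p\in\SP}\in\prod_{p\in\SP}\Cc(E_p,\SZ_p)$, I would apply Theorem~\ref{th:BK} to each component, obtaining a unique expansion $\varphi_p(x)=\sum_{n\geq 0}c_{n,p}g_{n,p}(x)$ with $c_{n,p}\in\SZ_p$ and $v_p(c_{n,p})\to+\infty$, where $g_{n,p}(x)=\pi_p(\ug_n(x))$. Setting $\uc_n:=(c_{n,p})_{p\in\SP}\in\hZ$, the restricted-product topology on $\hZ$ forces $\uc_n\to\underline{0}$: for a basic neighborhood $\prod_p O_p$ of $0$, the containment $\uc_n\in\prod_p O_p$ only needs to be checked at the finitely many primes where $O_p\neq\SZ_p$, and at those primes $v_p(c_{n,p})\to+\infty$ suffices. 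The same observation proves that the partial sums $\sum_{n=0}^N\uc_n\ug_n$ converge uniformly to $\ufi$ on $\uE$: components with $O_p=\SZ_p$ are automatic (because $c_{n,p}\in\SZ_p$ and $g_{n,p}\in\int(E_p,\SZ_p)$), while the finitely many other primes are handled by the uniform convergence of the Bhargava--Kedlaya expansion in $\Cc(E_p,\SZ_p)$.

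Uniqueness follows by projecting any candidate adelic expansion onto each $p$-component and invoking the local uniqueness of Theorem~\ref{th:BK}. The recursion~(\ref{eq:52}) is then immediate from $\ug_k(\ua_h)=\underline{0}$ for $h<k$ and $\ug_k(\ua_k)=\underline{1}$, exactly as in formula~(\ref{eq:46}). Finally, since $\pi_p(\uc_n)=c_{n,p}$ and $\pi_p(\ufi(\uy))=\varphi_p(y_p)$ for every $\uy\in\uE$, the norm identity reduces to the $p$-adic equality stated immediately after Theorem~\ref{th:BK}. The main subtle point is the interplay between uniform convergence on $\uE$ and componentwise $p$-adic convergence: the hypothesis $\#(E_p\pmod{p})>n$ for almost all $p$ is precisely what guarantees that the adelic coefficients $\uc_n$ tend to $\underline{0}$ in the restricted-product sense, validating the whole construction.
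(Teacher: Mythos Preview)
Your proposal is correct and follows essentially the same route as the paper: construct the adelic ordering componentwise from $p$-orderings, invoke Proposition~\ref{th:56} for part~1), and reduce parts~2) and~3) to the local Bhargava--Kedlaya theorem applied coordinate by coordinate. Your treatment is in fact more explicit than the paper's, which simply cites Corollary~\ref{th:mod}, Theorem~\ref{th:52}, Proposition~\ref{th:cyclic characteristic ideals}, and Proposition~\ref{th:56} for part~1) and then appeals directly to Theorem~\ref{th:BK} componentwise for the rest; one small quibble is your closing sentence, where the hypothesis $\#(E_p\pmod{p})>n$ is really what makes the adelic ordering exist (condition~(b) of Definition~\ref{def:adel}), whereas $\uc_n\to\underline{0}$ follows from $c_{n,p}\in\SZ_p$ together with the local $v_p(c_{n,p})\to+\infty$, exactly as you argued two paragraphs earlier.
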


\begin{proof}
The first assertion follows from Corollary~\ref{th:mod}, Theorem~\ref{th:52}, Proposition \ref{th:cyclic characteristic ideals} and Proposition~\ref{th:56}.
The second assertion is then a straightforward consequence of Theorem~\ref{th:BK} since $\ufi=(\varphi_p)\in\prod_p\Cc(E_p,\SZ_p)$ and, if the $\ug_n$'s form a basis of the $\hZ$-module $\int_{\Aa_f(\SQ)}(\uE,\hZ)$ then, for every $p\in\SP,$ the $\pi_p(\ug_n)$'s form a basis of the $\SZ_p$-module $\int_{\SQ_p}(E_p,\SZ_p)$, by definition of adelic ordering.
The last assertions are consequences of formula (\ref{eq:46}),  Theorem~\ref{th:BK} and the fact that
since $\{\ua_n\}_{n\geq 0}$ is an adelic ordering of $\uE$ then, for each $p\in \SP$, $\{\pi_p(\ua_n)\}_{n\geq 0}$ is a $p$-ordering of $E_p=\pi_p(\uE)$. 
\end{proof}

In fact, we can write the functions $\ufi$ as sums of series of polynomials even if these polynomials are not associated to an adelic ordering of $\uE$. It is enough that they form a basis of the $\hZ$-module $\int_{\Aa_f(\SQ)}(\uE,\hZ)$. Moreover, thanks to Lemma~\ref{rem:1}, we may consider functions $\ufi$ defined on a compact subset of $\Aa_f(\SQ)$ and with values in $\Aa_f(\SQ)$.

\begin{theorem}\label{th:63}
Assume that $\uE=\prod_{p\in\SP}E_p$ is a compact subset of $\Aa_f(\SQ)$ such that, for each $p\in\SP$, $E_p$ is infinite and, for each $n\geq 0$, $\#(E\pmod{p})>n$ for almost all $p\in\SP$. Then, there exist regular bases, in particular regular bases formed by polynomials in $\SQ[x]$, of the $\hZ$-module $\int_{\Aa_f(\SQ)}(\uE,\hZ)$. And, for any regular basis $\{\ug_n(x)\}_{n\geq 0}$, every $\ufi\in \Cc(\uE,\Aa_f(\SQ))\cap\prod_{p\in\SP}\Cc(E_p,\SQ_p)$ may be uniquely written as a uniformly convergent series in the $\ug_n$'s with coefficients in $\Aa_f(\SQ):$
$$\ufi(x)=\sum_{n\geq 0}\uc_n\,\ug_n(x)Ê\textrm{ where } \uc_n\in\Aa_f(\SQ) \textrm { and } \lim_{n\to+\infty}\uc_n=\underline{0}\,.$$
Moreover, 
$$\forall p\in\SP\quad \inf_{n\geq 0}v_p(\pi_p(\uc_n)) =\inf_{\uy\in \uE}v_p(\pi_p(\ufi(\uy))\,.$$
\end{theorem}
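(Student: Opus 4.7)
The plan is to reduce the statement to the classical $p$-adic Bhargava--Kedlaya theorem (Theorem~\ref{th:BK}) applied componentwise, using the rescaling of Lemma~\ref{rem:1} to pass from $\Aa_f(\SQ)$ to $\hZ$, and to assemble the resulting $p$-adic expansions into an adelic one.

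\emph{Existence of a regular basis in $\SQ[x]$.} By Lemma~\ref{rem:1}(1), pick $d \in \SN^*$ with $\uE' := d\uE = \prod_p dE_p \subseteq \hZ$. The hypotheses transfer to $\uE'$: each $dE_p$ is still infinite, and since multiplication by $d$ is a bijection of $\SZ_p$ for every $p \nmid d$, one has $\#(dE_p \pmod p) = \#(E_p \pmod p)$ for cofinitely many $p$, so condition (2) of Corollary~\ref{th:mod} is preserved. Corollary~\ref{th:mod} then produces a regular basis $\{f_n\}_{n \geq 0} \subseteq \SQ[x]$ of the $\SZ$-module $\int_\SQ(\uE',\hZ)$, which by Theorem~\ref{th:52} is simultaneously a regular basis of the $\hZ$-module $\int_{\Aa_f(\SQ)}(\uE',\hZ)$. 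The degree-preserving $\hZ$-linear bijection $h(x) \mapsto h(dx)$ carries $\int_{\Aa_f(\SQ)}(\uE',\hZ)$ onto $\int_{\Aa_f(\SQ)}(\uE,\hZ)$ and preserves $\SQ[x]$, so $g_n(x) := f_n(dx) \in \SQ[x]$ is the desired regular basis.

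\emph{Series expansion for an arbitrary regular basis.} Let $\{\ug_n\}_{n \geq 0}$ be any regular basis of the $\hZ$-module $\int_{\Aa_f(\SQ)}(\uE,\hZ)$. By the identity $\pi_p(\II_{n,\Aa_f}(\uE)) = \II_{n,\SQ_p}(E_p)$ (noted before Theorem~\ref{th:52}) together with Lemma~\ref{th:lemmabis}, each component sequence $\{\pi_p(\ug_n)\}_{n \geq 0}$ is a regular basis of the $\SZ_p$-module $\int(E_p,\SZ_p)$. Given $\ufi \in \Cc(\uE,\Aa_f(\SQ)) \cap \prod_p \Cc(E_p,\SQ_p)$, Lemma~\ref{rem:1}(2) furnishes $d_1 \in \SN^*$ with $d_1\ufi(\uE) \subseteq \hZ$, so that $\pi_p(\ufi(\uE)) \subseteq \SZ_p$ for every $p \nmid d_1$. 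For $\varphi_p := \pi_p \circ \ufi$, Theorem~\ref{th:BK} (combined with a change-of-basis step from the $p$-ordering basis to $\{\pi_p(\ug_n)\}$, which is upper triangular and unit-diagonal over $\SZ_p$) yields a unique expansion
$$\varphi_p(x) = \sum_{n \geq 0} c_{n,p}\,\pi_p(\ug_n)(x), \qquad c_{n,p} \in \SQ_p, \qquad v_p(c_{n,p}) \to +\infty,$$
together with $\inf_n v_p(c_{n,p}) = \inf_{\uy \in \uE} v_p(\pi_p(\ufi(\uy)))$. Setting $\uc_n := (c_{n,p})_{p \in \SP}$, for each $p \nmid d_1$ the valuation identity gives $v_p(c_{n,p}) \geq 0$ for every $n$, hence $\uc_n \in \Aa_f(\SQ)$; and since $v_p(c_{n,p}) \to +\infty$ at the finitely many bad primes, $\uc_n \to \underline{0}$ in the restricted-product topology. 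Uniform convergence of $\sum_n \uc_n \ug_n$ to $\ufi$ on $\uE$, uniqueness of the $\uc_n$, and the valuation formula of the statement all follow coordinate-by-coordinate from Theorem~\ref{th:BK}, since at every $p \nmid d_1$ all partial sums and tails automatically lie in $\SZ_p$.

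The main technical point is the verification that the coordinatewise coefficients $(c_{n,p})_p$ really assemble into an element of $\Aa_f(\SQ)$ and converge to $\underline{0}$ in the restricted-product topology; this is precisely the role of $d_1$, which bounds $p$-adic denominators uniformly in $n$ at all but finitely many primes and thereby controls the ``restricted'' part of the product.
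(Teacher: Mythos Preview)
Your proof is correct and follows essentially the same route as the paper: reduce the existence of a $\SQ[x]$-basis to Corollary~\ref{th:mod} and Theorem~\ref{th:52} via the rescaling of Lemma~\ref{rem:1}, and obtain the series expansion by applying the $p$-adic Bhargava--Kedlaya theorem componentwise. The only real difference is that where the paper simply cites \cite[Theorem~2]{bib:Bharg2} or \cite[Theorem~2.7]{bib:CC2} for the fact that an arbitrary regular basis of $\int(E_p,\SZ_p)$ works as well as a $p$-ordering basis, you spell this out as an explicit upper-triangular change of basis over $\SZ_p$ (note the diagonal entries are units in $\SZ_p$, not literally $1$); your handling of the ``restricted'' condition on the $\uc_n$ via the single denominator $d_1$ is exactly what the paper sketches in the paragraph following the statement.
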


The existence of regular bases, in particular bases formed by polynomials in $\SQ[x]$, follows from Theorem~\ref{th:52} and Corollary~\ref{th:mod}. But, we have to take care that, in general, the coefficients $\uc_n$ do not satisfy the recursive formula~(\ref{eq:52}) which implied easily the uniqueness of the coefficients in the previous theorem. Nevertheless, the existence and the uniqueness of the coefficients are consequences of an extension of  Theorem~\ref{th:BK}: following \cite[Theorem 2]{bib:Bharg2} or \cite[Theorem 2.7]{bib:CC2}, the bases associated to orderings may be replaced by any basis. 

Be careful to also note that to say that $\lim_{n\to+\infty}\uc_n=\underline{0}$ in $\Aa_f(\SQ)$ means not only that, for every $p\in\SP$, $\lim_n\pi_p(\uc_n)=0$ in $\SQ_p$, but also that there exist $N$ such that $\uc_n\in\hZ$ for $n\geq N$. The fact that the series $\sum_n\uc_n\,\ug_n(x)$ converges uniformly in $\Cc(\uE,\hZ)$ is an obvious consequence of both conditions satisfied by the $\uc_n$'s.

\begin{remark}
In Proposition~\ref{th:mal2}, the functions of $\prod_{p\in\SP}\Cc(\SZ_p,\SZ_p)$ were expanded as series in the binomial polynomials $\binom{x}{n}$. Note that these $\binom{x}{n}$'s form a regular basis of the $\hZ$-module $\int_{\Aa_f(\SQ)}(\hZ,\hZ)$ which corresponds either to an adelic ordering of $\hZ$ as in Theorem~\ref{th:BK1}, or to polynomials in $\int_{\SQ}(\hZ,\hZ)$ as in Theorem~\ref{th:63}.
\end{remark}

\subsection*{Acknowledgments}

The authors wish to thank the referee for carefully reading the paper.

\end{document}